\journal{arxiv}
\newtheorem{theorem}{Theorem}[section]
\newtheorem{lemma}[theorem]{Lemma}
\newtheorem{corollary}[theorem]{Corollary}
\newtheorem{definition}[theorem]{Definition}
\newtheorem{example}[theorem]{Example}
\newtheorem{remark}{Remark}
\newcommand{\N}{\mathbb{N}}
\newcommand{\fallingfactorial}[1]{^{\underline{#1}}}
\begin{document}
	
	\begin{frontmatter}

		\title{On Hilfer fractional difference operator}
			
			\author[]{Syed Sabyel Haider\corref{mycorrespondingauthor}}
		\author{Mujeeb ur Rehman, Thabet Abdeljawad }	
			\address[]{}
		\begin{abstract}
	In this article, a new definition of fractional Hilfer difference operator is introduced.
	Definition based properties are developed and utilized to construct fixed point operator for fractional order Hilfer difference equations with initial condition.
	We acquire some conditions for existence, uniqueness,   Ulam-Hyers and Ulam-Hyers-Rassias stability. Modified Gronwall's inequality is presented for discrete calculus with the delta difference operator.		 
		\end{abstract}
		\begin{keyword}
			Hilfer fractional difference\sep Discrete Mittag-Leffler functions\sep Delta Laplace transform \sep    Gronwall's inequality\sep  Existence and uniqueness\sep   Ulam stability of initial value problem.  
			\MSC[] 39A70\sep 39A12\sep 34A12 		\end{keyword}
		
	\end{frontmatter}
	
\section{Introduction}
	In the topics of  discrete fractional calculus a variety of  results can be found in \cite{Abdel,DFC9,Atici,DFC15,DFC00,DFC102,DFC101,Goodrich,DFC18,Holm2,Holm,abdeljawad2018riemann} which has helped to construct theory of the subject.  A rigorous intrigue  in  fractional calculus of differences has been exhibited by Atici and Eloe \cite{Atici,DFC15}. They  explored characteristics of  falling function, a new  power law for  difference operators and the composition of  sums and differences of arbitrary order.  Holm presented advance composition formulas for  sums and differences  in his dissertation \cite{Holm}.

	Hilfer  fractional order  derivative was introduced in \cite{Hilfer}.  Hilfer's definition is illustrated as follows: the fractional derivative of order $0 < \mu < 1$ and type $0\leq \nu \leq 1$ is
	\begin{equation*}D_{a}^{\mu,\nu}f(x)=\Big(I_{a}^{\nu(1-\mu)}\frac{d}{dx} \big( I_{a}^{(1-\nu)(1-\mu)} f\big)\Big)(x).\end{equation*}
	The special cases are Riemann-Liouville fractional derivative   for  $\nu=0$ and Caputo 	fractional derivative  for  $\nu=1$. Furati et al. \cite{Furati,Furati2} primarily studied the existence theory  of Hilfer  fractional   derivative and also explained the type parameter $\nu$ as interpolation between the	Riemann-Liouville and the Caputo derivatives. It generates more types of stationary
	states and gives an extra degree of freedom on the initial condition.
	
	Hilfer fractional calculus has  been examined broadly by a lot of researchers.
	Some  recent studies involving Hilfer fractional derivatives can be found in \cite{Chitalkar,Rezazadeh,Sousa,Vivek2,Vivek3,Wang2}.
	Majority of the work in discrete fractional calculus developed as analogues of continuous fractional calculus.    Extensive work on  Hilfer fractional derivative and on its extensions has been done, namely: Hilfer-Hadamard \cite{Abbas,Kassim,Qassim,ahmad2019hyers}, K-fractional Hilfer \cite{Dorrego}, Hilfer-Prabhakar \cite{Garra}, Hilfer-Katugampola \cite{Oliveira}  and $\psi$-Hilfer \cite{Abdo} fractional operator. However, to the best of our knowledge  no work is available  for Hilfer fractional difference operator in the  delta fractional setting.	Also  formation of fractional difference operator is an important aspect in view of mathematical interest and numerical formulae as well as  the applications.   This motivate us to generalize the two existing  fractional difference operators namely, Riemann-Liouville and Caputo difference operator in Hilfer's sense.

	We start   by introducing a generalized difference operator analogous to Hilfer fractional derivative \cite{Hilfer}.  To keep the interpolative property of Hilfer fractional difference operators we carefully choose the starting points of fractional sums. Some  important composition properties are developed and utilized to construct fixed point operator for a new class of Hilfer fractional nonlinear difference equation with initial conditions involving Reimann-Liouville fractional sum. An  application of Brouwer's fixed point theorem gives us conditions for the  existence of solution for new class of Hilfer fractional nonlinear difference equation. For  uniqueness of solution  we apply Banach contraction principle.
	To solve   linear  fractional Hilfer difference  equation we use successive approximation  method and  then define the discrete Mittag-Leffler function in the delta difference setting.  Gronwall's inequality  for discrete calculus with the delta difference  operator is modified. An application of Gronwall's inequality has been given  for stability  of solution to fractional order Hilfer difference equations with different initial conditions.

	In the continuous setting extensive work on Ulam-Hyers-Rassias stability for non integer order differential equation has been done. The idea of Ulam-Hyers type stability is important to both pure and applied problems; especially in biology, economics and numerical analysis.  Rassias \cite{Rassias}  introduced continuity condition which produced an acceptable  stronger results. However in discrete fractional setting a limited work can be found  \cite{Chen,Jonnalagadda,Sabyel2020}. For Hilfer  delta difference equation,    conditions are acquired for  Ulam-Hyers  and Ulam-Hyers-Rassias stability  with illustrative example.  Interested reader may find some details on Ulam-Hyers-Rassias stability in \cite{Hyers,Jung,Rassias,Ulam}.
	
	In this article, we shall study initial value problem (IVP)  for following Hilfer fractional difference equation.   Let $\eta=\mu+\nu-\mu\nu$, then for  $0 < \mu < 1$ and $0\le \nu \le 1$, we have
	\begin{equation}\label{eq414}
	\begin{split}
	\begin{cases}
	\Delta_{a}^{\mu,\nu}u(x)+g(x+\mu-1, u(x+\mu-1))=0,~\text{for}~x \in \mathbb{N}_{a+1-\mu}, \\
	\Delta_{a}^{-(1-\eta)}u(a+1-\eta)=\zeta,  ~~~\zeta\in \mathbb{R}.
	\end{cases}
	\end{split}
	\end{equation}
	In  Section 2, we state few basic but important  results from discrete  calculus. In  third section, a new fractional Hilfer difference operator is introduced which interpolate  Riemann-Liouville and Caputo fractional differences, we also  develop some important properties of newly defined operator.    Conditions for existence, uniqueness and Ulam-Hayer   stability will be obtained in  Section 4. Last section comprise of modification and application  of discrete Gronwall’s
	inequality in delta  setting.
	
	\section{Preliminaries} Some  basics from discrete fractional calculus are given for later use in following sections.
	The functions we will consider are usually defined on  the set
	$ \mathbb{N}_{a} := \lbrace a, a+1, a+2,\cdots \rbrace,$ where $a \in \mathbb{R}$ is fixed.  Some times the set $ \mathbb{N}_{a} $ is called isolated time scale. Similarly the set $\mathbb{N}_{a}^{b} :=  \{ a, a+1, a+2,\cdots, b \}$ and $[a, b]_{\mathbb{N}_{a}}:=[a, b] \cap\mathbb{N}_{a}$ \cite{Feng} for $b=a+k, ~ k \in \mathbb{N}_{0}.$ The  jump operators   $\sigma(t)=t+1$, and $\rho(t)=t-1$  are forward and backward respectively for $t \in\mathbb{N}_{a}$. Furthermore, the set  $\mathcal{R} =\{p_{i}: 1+p_{i}(x) \neq 0 \}$  contains regressive  functions.
	\begin{definition}\emph{\cite{Bohner}}	 Assume $f: \mathbb{N}_{a}  \to \mathbb{R}$ and $b\leq c$ are in $\mathbb{N}_{a}$, then the  delta definite integral is defined  by
		\begin{equation*}		\int_{b}^{c}  f(x)\Delta x	=\sum_{x=b}^{c-1} f(x).	\end{equation*}
		Note that the value of integral $\int_{b}^{c}  f(x)\Delta x$,  depending on the set $\{b, b+1,\cdots,c-1\}$. Also we adopt the empty sum convention
		$		\sum_{x=b}^{b-k} f(x)=0, ~~\mbox{whenever} ~ k \in \mathbb{N}_{1}.	$
	\end{definition}
	
	\begin{definition}\emph{\cite{Goodrich}}\label{def201}	 Assume  $\mu>0$ and $f: \mathbb{N}_{a}  \to \mathbb{R}$. Then the   delta fractional sum of $f$ is defined by
		$\Delta_{a}^{-\mu}f(x):=\sum_{\tau=a}^{x-\mu} h_{\mu-1}(x,\sigma (\tau))
		f(\tau)$, for $x  \in \mathbb{N}_{a+\mu},$
		where $h_{\mu}(t,s)=\frac{{(t-s)\fallingfactorial{\mu}}}{{\Gamma(\mu +1 )}}$  is  $\mu^{th}$ fractional Taylor monomial based at $s$ and ${t\fallingfactorial{\mu}}$ is the generalized falling function.
	\end{definition}
	\begin{lemma}\emph{\cite{Goodrich}}\label{lem1} Assume $\nu\geq0$ and $\mu>0$. Then
		$\Delta_{a+\nu}^{-\mu}(x-a)\fallingfactorial{\nu}=\frac{\Gamma(\nu+1)}{\Gamma(\mu+\nu+1)}(x-a)\fallingfactorial{\mu+\nu}$, for $x  \in \mathbb{N}_{a+\mu+\nu}$.
	\end{lemma}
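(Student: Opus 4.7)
The plan is to unfold the definition of the delta fractional sum and reduce the identity to a classical Vandermonde-type convolution of generalized binomial coefficients.

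First, using the definition of $\Delta_{a+\nu}^{-\mu}$ applied with base point $a+\nu$, I would write
\begin{equation*}
\Delta_{a+\nu}^{-\mu}(x-a)\fallingfactorial{\nu} = \frac{1}{\Gamma(\mu)}\sum_{\tau=a+\nu}^{x-\mu} (x-\tau-1)\fallingfactorial{\mu-1}(\tau-a)\fallingfactorial{\nu},
\end{equation*}
and perform the substitution $\tau = a+\nu+s$. Setting $M := x-a-\mu-\nu$, which lies in $\mathbb{N}_0$ by the hypothesis $x\in\mathbb{N}_{a+\mu+\nu}$, the sum collapses to
\begin{equation*}
\frac{1}{\Gamma(\mu)}\sum_{s=0}^{M}(M+\mu-s-1)\fallingfactorial{\mu-1}(s+\nu)\fallingfactorial{\nu},
\end{equation*}
an expression whose $x$- and $a$-dependence has been absorbed entirely into $M$.

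Next, I would rewrite each falling factorial as a ratio of gamma functions and repackage it as a generalized binomial coefficient using $\binom{\alpha+k-1}{k}=\Gamma(\alpha+k)/(\Gamma(\alpha)\,k!)$. Factoring the prefactor $\Gamma(\mu)\Gamma(\nu+1)$ out of both sides reduces the target equality to the purely combinatorial identity
\begin{equation*}
\sum_{s=0}^{M}\binom{M+\mu-s-1}{M-s}\binom{s+\nu}{s} = \binom{M+\mu+\nu}{M}.
\end{equation*}
Applying the upper-negation rule $\binom{\alpha+k-1}{k}=(-1)^k\binom{-\alpha}{k}$ to each of the two binomials on the left turns the sum into $(-1)^M\sum_{s=0}^M\binom{-\mu}{M-s}\binom{-\nu-1}{s}$, which collapses via Chu--Vandermonde to $(-1)^M\binom{-\mu-\nu-1}{M}$; a final upper-negation then yields exactly $\binom{M+\mu+\nu}{M}$.

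The main obstacle is that $\mu$ and $\nu$ are real, not integer, so Chu--Vandermonde is not literally the elementary coefficient identity. I would handle this in one of two standard ways: either invoke Vandermonde's theorem in its hypergeometric form, which is valid for arbitrary real upper parameters, or, with $M\in\mathbb{N}_0$ held fixed, note that both sides of the displayed identity are polynomials in $(\mu,\nu)$ of degree at most $M$ that agree on $\mathbb{N}_0\times\mathbb{N}_0$, hence coincide as polynomials. Everything else is bookkeeping with gamma functions and is justified by the observation $M\in\mathbb{N}_0$, which keeps all arguments of $\Gamma$ away from the non-positive integers.
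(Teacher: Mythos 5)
Your argument is correct. Note, however, that the paper itself offers no proof of this statement: it is Lemma~2.3, quoted verbatim from the Goodrich--Peterson reference \cite{Goodrich} as a known power rule, so there is no in-paper derivation to match yours against. What you have written is a legitimate self-contained verification: the unfolding of Definition~\ref{def201} with base point $a+\nu$, the shift $\tau=a+\nu+s$ absorbing everything into $M=x-a-\mu-\nu\in\mathbb{N}_0$, the reduction to $\sum_{s=0}^{M}\binom{M+\mu-s-1}{M-s}\binom{s+\nu}{s}=\binom{M+\mu+\nu}{M}$ after cancelling $\Gamma(\mu)\Gamma(\nu+1)$, and the double upper-negation plus Chu--Vandermonde all check out, and you correctly flag the two genuine delicacies (real parameters in Vandermonde, handled by the polynomial/Zariski-density argument, and the non-degeneracy of the gamma arguments, which $\mu>0$, $\nu\ge0$, $0\le s\le M$ guarantee). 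For comparison, the standard route in the cited literature establishes this power rule either via the delta Laplace transform (compose Lemma~\ref{lem7} with the transform of the Taylor monomial and invert) or by induction using summation by parts and the identity $\Delta_s h_\mu(t,s)=-h_{\mu-1}(t,\sigma(s))$; your combinatorial reduction is more elementary in that it needs no transform machinery or exponential-order hypotheses, at the cost of invoking the generalized Vandermonde convolution, which is itself roughly equivalent in depth to the lemma being proved.
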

	\begin{definition}\label{def202}\emph{\cite{Atici,miller}}	 Assume $f:\mathbb{N}_{a}\to\mathbb{R}$, $\mu>0$ and $m-1<\mu\leq m$, for $m\in\mathbb{N}_{1}$ . Then the  Riemann-Liouville  fractional difference of $f$ at $a$ is defined by
		$$\Delta_{a}^{\mu} f(x)=\Delta^{m}\Delta_{a}^{-(m-\mu)}f(x)=\sum_{\tau=a}^{x+\mu} h_{-\mu-1}(x,\sigma(\tau))	
		f(\tau) , ~~\text{for~} x  \in \mathbb{N}_{a+m-\mu}.$$
	\end{definition}
	
	\begin{definition}\label{def203}\emph{\cite{Abdel,Abdel2}}	 Assume $f: \mathbb{N}_{a}  \to \mathbb{R}$, $\mu>0$ and $m-1<\mu\leq m$, for $m\in\mathbb{N}_{1}$. Then the  Caputo   fractional difference of $f$ at $a$ is defined by
		$$~^{c}\Delta_{a}^{\mu} f(x)=\Delta_{a}^{-(m-\mu)}\Delta^{m}f(x)=\sum_{\tau=a}^{x-(m-\mu)} h_{m-\mu-1}(x,\sigma(\tau))	
		\Delta^{m}f(\tau),$$~~\text{for~} $x \in \mathbb{N}_{a+m-\mu}.$
	\end{definition}
	
	\begin{definition}\emph{\cite{Goodrich}}	Assume $ p\in \mathcal{R}$ and $x, y\in \mathbb{N}_{a}$. Then the  delta exponential function is given by,		\[		e_{ p(x)}(x, y)=		\begin{cases}			\prod_{t=y}^{x-1} [1+p(t)], ~~~~&\text{if}~x \in  \N_{y},\\ 	\prod_{t=x}^{y-1} [1+p(t)]^{-1}, ~~~~&\text{if}~x \in  \N_{a}^{y-1}.		\end{cases}	\] By  empty product convention $\prod_{t=y}^{y-1} [h(t)]:=1$ for any function $h$.\end{definition}

	\begin{definition}\emph{\cite{Bohner} }\label{def126}	 Assume $f: \mathbb{N}_{a}  \to \mathbb{R}$.	Then  the delta Laplace transform of $f$ based at $a$ is defined  by	\[		\mathscr{L}_{a} \{ f\} (y)		=\int_{a}^{\infty} e_{\ominus y}(\sigma (x), a) f(x)\Delta x,	\]	for all complex numbers $ y\neq -1$  such that this improper integral converges.\end{definition}

	\begin{lemma}\emph{\cite{Goodrich}}\label{lem7} Assume $f: \mathbb{N}_{a}  \to \mathbb{R}$ is of exponential order $r>1$ and $\mu>0$. Then  	\[ \text{for~} |y+1|>r, \text{we have,}~	\mathscr{L}_{a+\mu} \{ \Delta_{a}^{-\mu}f \} (y) =\frac{(y+1)^{\mu}}{y^{\mu}}\tilde{ F}_{a}(y).	 \]\end{lemma}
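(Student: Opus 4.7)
The plan is to unfold both definitions, interchange the order of summation, and identify the resulting inner sum via Newton's generalized binomial theorem. Starting from
$$\mathscr{L}_{a+\mu}\{\Delta_a^{-\mu}f\}(y) = \sum_{x=a+\mu}^{\infty} e_{\ominus y}(\sigma(x), a+\mu) \sum_{\tau=a}^{x-\mu} h_{\mu-1}(x, \sigma(\tau))\, f(\tau),$$
I would first verify absolute convergence of the double series using the exponential order bound $|f(\tau)| \leq Mr^{\tau-a}$ together with the polynomial growth $h_{\mu-1}(x,\sigma(\tau)) = O((x-\tau)^{\mu-1})$, combined with the assumption $|y+1|>r$ which ensures that $|e_{\ominus y}(\sigma(x),a+\mu)|$ decays geometrically. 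Fubini then permits swapping the summations to obtain
$$\sum_{\tau=a}^{\infty} f(\tau) \sum_{x=\tau+\mu}^{\infty} e_{\ominus y}(\sigma(x), a+\mu)\, h_{\mu-1}(x, \sigma(\tau)).$$

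Next I would substitute $x = \tau+\mu+k$ for $k \in \mathbb{N}_{0}$ in the inner sum and compute the two factors explicitly:
$$e_{\ominus y}(\sigma(x), a+\mu) = (1+y)^{-(\tau-a+k+1)} = e_{\ominus y}(\sigma(\tau), a)\,(1+y)^{-k},$$
$$h_{\mu-1}(x, \sigma(\tau)) = \frac{(\mu+k-1)^{\underline{\mu-1}}}{\Gamma(\mu)} = \binom{\mu+k-1}{k}.$$
The inner sum then factorizes as $e_{\ominus y}(\sigma(\tau),a)\sum_{k=0}^{\infty} \binom{\mu+k-1}{k}(1+y)^{-k}$, and invoking the generalized binomial identity $\sum_{k=0}^{\infty} \binom{\mu+k-1}{k} z^k = (1-z)^{-\mu}$ with $z=(1+y)^{-1}$ gives
$$\sum_{k=0}^{\infty} \binom{\mu+k-1}{k}(1+y)^{-k} = \left(\frac{y}{y+1}\right)^{-\mu} = \frac{(y+1)^{\mu}}{y^{\mu}}.$$
Since this value is independent of $\tau$, it can be pulled out of the remaining $\tau$-sum, leaving $\tilde F_a(y) = \sum_{\tau=a}^{\infty} e_{\ominus y}(\sigma(\tau),a)f(\tau)$, which yields the claimed formula.

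The main obstacle is the careful bookkeeping with the shifted starting points $a+\mu$, $\sigma(\tau)$, and $\sigma(x)$, together with a clean justification that the geometric decay $|1+y|^{-k}$ dominates the polynomial factor $\binom{\mu+k-1}{k}=O(k^{\mu-1})$ when $|y+1|>r>1$, so that both the interchange of summations and the convergence of the binomial series hold rigorously. The remaining manipulations of the discrete exponential $e_{\ominus y}$ and the falling-factorial definition of $h_{\mu-1}$ are routine, and the identity $e_{\ominus y}(\sigma(\tau)+\mu,a+\mu)=e_{\ominus y}(\sigma(\tau),a)$ used above follows directly from the product representation of $e_{p}(x,y)$ with constant $p=\ominus y$.
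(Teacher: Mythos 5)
The paper offers no proof of this lemma; it is quoted verbatim from Goodrich--Peterson \cite{Goodrich}, so there is nothing internal to compare against. Your argument is correct as it stands: the unfolding of the two definitions, the interchange of summation over the region $x\ge \tau+\mu$, the substitution $x=\tau+\mu+k$ giving $e_{\ominus y}(\sigma(x),a+\mu)=e_{\ominus y}(\sigma(\tau),a)(1+y)^{-k}$ and $h_{\mu-1}(x,\sigma(\tau))=\binom{\mu+k-1}{k}$, and the evaluation of $\sum_{k\ge 0}\binom{\mu+k-1}{k}(1+y)^{-k}=(1-(1+y)^{-1})^{-\mu}=(y+1)^{\mu}/y^{\mu}$ are all sound, and the hypotheses $|y+1|>r>1$ do exactly the work you assign them (geometric domination of the polynomially growing binomial coefficients, plus absolute convergence of the double series since $\binom{\mu+k-1}{k}>0$ for $\mu>0$). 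The one stylistic difference worth noting is that the textbook route factors the result through two separate ingredients --- the transform of the Taylor monomial, $\mathscr{L}_{a}\{h_{\mu-1}(\cdot,a)\}(y)=(y+1)^{\mu-1}/y^{\mu}$, together with a convolution theorem for the delta Laplace transform --- whereas your direct double-sum computation proves the statement in one pass, in effect re-deriving the convolution theorem for this particular kernel. Both reduce to the same generalized binomial series; your version is more self-contained, the reference version is more modular and reusable.
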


	\begin{lemma}\emph{\cite{Goodrich}}\label{lem8} Assume that $f: \mathbb{N}_{a}  \to \mathbb{R}$ is of exponential order $r>0$ and   $m$ is positive integer.  Then for  $ |y+1|>r$	\[		\mathscr{L}_{a} \{ \Delta_{}^{m}f \} (y) =y^{m}\tilde{ F}_{a}(y)-\sum_{j=0}^{m-1} y^{j}\Delta_{}^{m-1-j} f(a).	 \]\end{lemma}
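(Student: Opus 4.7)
The plan is to establish the identity by induction on $m$, with the base case $m=1$ done by summation-by-parts.

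For the base case I first note that the Laplace kernel $e_{\ominus y}(\sigma(x),a)$ satisfies the useful identity $e_{\ominus y}(\sigma(x),a)=\tfrac{1}{1+y}\,e_{\ominus y}(x,a)$, which follows from the definition of the exponential (with $\ominus y=-y/(1+y)$) and gives the elementary difference rule $\Delta_{x}\, e_{\ominus y}(x,a)=(\ominus y)\,e_{\ominus y}(x,a)$. I then apply the discrete product rule $\Delta(uv)=u^{\sigma}\Delta v+v\Delta u$ with $u(x)=e_{\ominus y}(x,a)$ and $v(x)=f(x)$, which, after rearrangement and summing on $[a,N-1]$, yields
\begin{equation*}
\sum_{x=a}^{N-1}e_{\ominus y}(\sigma(x),a)\,\Delta f(x)=e_{\ominus y}(N,a)f(N)-f(a)-\sum_{x=a}^{N-1}(\ominus y)\,e_{\ominus y}(x,a)\,f(x).
\end{equation*}
The hypothesis $|y+1|>r$ together with $f$ being of exponential order $r$ forces the boundary term $e_{\ominus y}(N,a)f(N)$ to zero as $N\to\infty$; this is the only step where convergence needs to be justified and is the main (though mild) obstacle. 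The remaining sum can be rewritten in terms of $\tilde{F}_{a}(y)$ by pulling out a factor of $1+y$ from the kernel, and a short computation using $-\ominus y=y/(1+y)$ collapses the result to $y\tilde{F}_{a}(y)-f(a)$, establishing the case $m=1$.

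For the inductive step I assume the formula holds for exponent $m-1$, i.e.\ $\mathscr{L}_{a}\{\Delta^{m-1}f\}(y)=y^{m-1}\tilde{F}_{a}(y)-\sum_{j=0}^{m-2}y^{j}\Delta^{m-2-j}f(a)$. Writing $\Delta^{m}f=\Delta(\Delta^{m-1}f)$ and applying the base case to $\Delta^{m-1}f$ (which is again of exponential order $r$, so the hypothesis carries over) gives
\begin{equation*}
\mathscr{L}_{a}\{\Delta^{m}f\}(y)=y\,\mathscr{L}_{a}\{\Delta^{m-1}f\}(y)-\Delta^{m-1}f(a).
\end{equation*}
Substituting the inductive hypothesis, distributing $y$, and reindexing the resulting sum (letting $k=j+1$) absorbs the extra $-\Delta^{m-1}f(a)$ term into the $k=0$ summand and yields exactly $y^{m}\tilde{F}_{a}(y)-\sum_{j=0}^{m-1}y^{j}\Delta^{m-1-j}f(a)$, completing the induction. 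Overall, the argument is a clean mimic of the continuous Laplace-transform identity $\mathscr{L}\{f^{(m)}\}=s^{m}F(s)-\sum s^{j}f^{(m-1-j)}(0)$, with the only genuine work being the $m=1$ summation-by-parts and the justification of the vanishing boundary term.
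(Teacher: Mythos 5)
The paper does not prove this lemma at all: it is quoted verbatim from the cited reference (Goodrich--Peterson, \emph{Discrete Fractional Calculus}), so there is no in-paper argument to compare against. Your proof is correct and is essentially the standard one from that reference: establish the first-order identity $\mathscr{L}_a\{\Delta f\}(y)=y\tilde F_a(y)-f(a)$ and then induct, with the only analytic content being the vanishing of the boundary term $e_{\ominus y}(N,a)f(N)=f(N)/(1+y)^{N-a}\to 0$ under $|y+1|>r$, which you correctly isolate and justify. The only cosmetic remark is that your summation-by-parts for the base case can be replaced by a one-line index shift in $\sum_{x=a}^{\infty}f(x+1)(1+y)^{-(x-a+1)}$, since $e_{\ominus y}(\sigma(x),a)=(1+y)^{-(x-a+1)}$ for constant $\ominus y$; both routes give the same identity, and your inductive step (including the reindexing $k=j+1$ that absorbs $-\Delta^{m-1}f(a)$ as the $k=0$ summand) is exactly right.
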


	\begin{lemma}\emph{\cite{Goodrich}}\label{lem202}(Fundamental theorem for the difference calculus)
		Assume $f: \mathbb{N}_{a}^{b}\to \mathbb{R}$ and $F$ is an antidifference of $f$ on $\mathbb{N}_{a}^{b+1}$. Then $\sum_{t=a}^{b}  f(t)=\sum_{t=a}^{b}  \Delta F(t)$ $=F(b+1)-F(a).$
	\end{lemma}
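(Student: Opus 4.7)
The plan is to prove this by a direct telescoping argument, which is the discrete analogue of the fundamental theorem of calculus. The result is essentially mechanical once the definitions are unfolded, so the proof will be short and no deep idea is needed.

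First I would unpack the hypothesis: by definition, $F$ being an antidifference of $f$ on $\mathbb{N}_{a}^{b+1}$ means $\Delta F(t)=f(t)$ for every $t \in \mathbb{N}_{a}^{b}$. Substituting this into the left-hand side immediately gives $\sum_{t=a}^{b} f(t)=\sum_{t=a}^{b}\Delta F(t)$, which accounts for the first equality in the statement.

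Next I would expand the forward difference using its definition $\Delta F(t)=F(t+1)-F(t)$, so that
\[
\sum_{t=a}^{b}\Delta F(t)=\sum_{t=a}^{b}\bigl[F(t+1)-F(t)\bigr].
\]
Writing this sum term by term, each interior value $F(a+1),F(a+2),\ldots,F(b)$ appears once with a plus sign (from the shifted index) and once with a minus sign (from the unshifted index), leaving only the boundary contributions $F(b+1)-F(a)$. This is the standard telescoping collapse and yields the desired identity.

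Since the argument is essentially a one-line telescoping computation, there is no serious obstacle; the only care required is to note that the antidifference hypothesis is assumed on $\mathbb{N}_{a}^{b+1}$ (rather than just $\mathbb{N}_{a}^{b}$) precisely so that the value $F(b+1)$ on the right-hand side is well defined. A short induction on $b$ starting from the trivial case $b=a-1$ (empty sum convention) and using $\Delta F(b)=f(b)$ in the inductive step would provide a fully rigorous alternative, but the telescoping presentation is cleaner and suffices.
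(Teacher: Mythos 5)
Your proof is correct: the paper states this lemma as a cited result from Goodrich--Peterson and gives no proof of its own, and your telescoping argument (unfolding $\Delta F(t)=F(t+1)-F(t)$ and cancelling interior terms) is exactly the standard argument used in that reference. Your remark about why the hypothesis is posed on $\mathbb{N}_{a}^{b+1}$, so that $F(b+1)$ is defined, is the only point of care and you have handled it correctly.
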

The definition of Ulam stability for fractional
difference equations is introduced in \cite{Chen}.
Consider the system \eqref{eq414} and the following inequalities:
\begin{equation}\label{eq501}
\Big{|}\Delta_{a}^{\mu,\nu}v(y)+g(y+\mu-1, v(y+\mu-1))     \Big{|} \le \epsilon,
~~~~y\in [a, T]_{\mathbb{N}_{a}},\end{equation}\vspace{-22pt}
\begin{equation}\label{eq502}
\Big{|}\Delta_{a}^{\mu,\nu}v(y)+g(y+\mu-1, v(y+\mu-1)) \Big{|} \le \epsilon \psi (\rho(y)+\nu),
~~~~ y\in [a, T]_{\mathbb{N}_{a}},\end{equation}
where $\psi: [a, T]_{\mathbb{N}_{a}}\to \mathbb{R}^{+}$.
\begin{definition}\emph{\cite{Chen}}\label{def205}	
	A solution 	$u\in Z$ of system \eqref{eq414} is Ulam-Hyers stable if there exists a real number $d_{f}>0$	 such that for each $\epsilon>0$ and for
	every solution $v \in Z$ of inequality \eqref{eq501}, if it satisfies
	\begin{equation}\label{eq503}
	\big{|}\big{|}v-u\big{|}\big{|} \le \epsilon d_{f}.\end{equation}
	A solution of system \eqref{eq414} is  generalized Ulam-Hyers stable if we
	substitute the function $\phi_{f}(\epsilon)$ for the constant $\epsilon d_{f}$ in inequality \eqref{eq503}, where $\phi_{f}(\epsilon)\in C(R^{+}, R^{+})$ and $\phi_{f}(0)=0$.
\end{definition}

\begin{definition}\emph{\cite{Chen}}\label{def206}	
	A solution
	$u\in Z$ of system \eqref{eq414}  is  Ulam-Hyers-Rassias stable with respect to  function  $\psi$  if there	exists a real number $d_{f,\psi}>0$
	such that for each $\epsilon>0$ and for
	every solution $v \in Z$ of inequality \eqref{eq502},  if it satisfies
	\begin{equation}\label{eq504}
	\big{|}\big{|}v-u\big{|}\big{|} \le \epsilon  \psi(y) d_{f,\psi},
	~~~~y\in [a, T]_{\mathbb{N}_{a}}.\end{equation}
	The solution of system \eqref{eq414} is  generalized Ulam-Hyers-Rassias stable if we
	substitute the function $\Phi(y)$ for the function $\epsilon \psi(y)$ in inequalities \eqref{eq502} and \eqref{eq504}.
\end{definition}
\section{Hilfer like fractional difference }
In this section, we give  generalize  definition of the Hilfer like fractional difference operator.  Motivated by the concept  of Hilfer fractional derivative \cite{Hilfer}, and to keep the interpolative property  we present the
following definition.
Assume $f: \mathbb{N}_{a}  \to \mathbb{R}$, then the fractional difference of order $m-1< \mu < m$, for $m\in\mathbb{N}_{1}$  is  given by
$	\Delta_{a}^{\mu,\nu}f(x)=\Delta_{a+(1-\nu)(m-\mu)}^{-\nu(m-\mu)}\Delta^{m} \Delta_{a}^{-(1-\nu)(m-\mu)} f(x),$
for $x  \in \mathbb{N}_{a+m-\mu}$, where $0\leq \nu \leq 1$ is the type of difference operator.  Observe that domain of
$\Delta_{a}^{-(1-\nu)(m-\mu)} f(x)$ is $a+(1-\nu)(m-\mu)$, whereas integer-order differences keeping the same domain \cite{Holm}. The starting point of the last sum  is compatible with the starting point for the domain of the  function $\Delta^{m} \Delta_{a}^{-(1-\nu)(m-\mu)} f(x)$, which is $a+(1-\nu)(m-\mu)$. This allow us the successive composition of operators in above expression and the final domain of $\Delta_{a}^{\mu,\nu}f(x)$ is $\mathbb{N}_{a+m-\mu}$.
To get some nice properties, we restrict   $0 < \mu < 1$ throughout the article.
\begin{definition} \label{def1} Assume $f: \mathbb{N}_{a}  \to \mathbb{R}$, then the fractional difference of order $0 < \mu < 1$ and type $0\leq \nu \leq 1$ is defined by
	\begin{equation*}\Delta_{a}^{\mu,\nu}f(x)=\Delta_{a+(1-\nu)(1-\mu)}^{-\nu(1-\mu)}\Delta  \Delta_{a}^{-(1-\nu)(1-\mu)} f(x),\end{equation*}
	for $x  \in \mathbb{N}_{a+1-\mu}$.			
\end{definition}
\noindent The special cases are Riemann-Liouville  fractional difference \cite{Atici,miller}   for  $\nu=0$ and Caputo fractional difference \cite{Abdel,Abdel2}	  for  $\nu=1$.

First we develop some composition properties to use in the next section, to construct a  fixed point operator for a new class of Hilfer fractional nonlinear difference equation with initial conditions involving Reimann-Liouville fractional sum. Also we present the  delta  Laplace transform for newly defined  Hilfer  fractional difference operator.
\begin{lemma}\label{lem301}
	Assume $f: \mathbb{N}_{a}  \to \mathbb{R}$,  $0 < \mu < 1$ and  $0\leq \nu \leq 1$, then for  $x\in N_{a+1}$
	\begin{itemize}	
		\item[$(i)$]	$\Delta_{a+1-\mu}^{-\mu}[\Delta_{a}^{\mu,\nu}f(x)]=\Delta_{a+(1-\nu)(1-\mu)}^{-(\mu+\nu-\mu\nu)}\Delta \Delta_{a}^{-(1-\nu)(1-\mu)} f(x),$
		\item[$(ii)$]	$\Delta_{a+1-\mu}^{-\mu}[\Delta_{a}^{\mu,\nu}f(x)]=\Delta_{a+(1-\nu)(1-\mu)}^{-(\mu+\nu-\mu\nu)} \Delta_{a}^{\mu+\nu-\mu\nu}f(x),$
		\item[$(iii)$]	$\Delta_{a+\mu}^{\mu,\nu}[\Delta_{a}^{-\mu}f(x)]=\Delta_{a+(1-\nu+\mu\nu)}^{-\nu(1-\mu)} \Delta_{a}^{\nu(1-\mu)}f(x),$
		\item[$(iv)$]	$\Delta_{a+\mu}^{\mu,\nu}[\Delta_{a}^{-\mu}f(x)]=
		f(x)-\Delta_{a}^{-(1-\nu(1-\mu))} f(a+1-\nu(1-\mu)) \times h_{\nu(1-\mu)-1}(x, a+$ $~~~~~~~~~~~~~~~~~~~~~~~~1-\nu(1-\mu)).$
	\end{itemize}
\end{lemma}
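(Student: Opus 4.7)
The plan is to prove the four parts sequentially, with each part building on the previous. Throughout, the key algebraic identity is $(1-\nu)(1-\mu)+\nu(1-\mu)=1-\mu$ and its companion $1-(\mu+\nu-\mu\nu)=(1-\mu)(1-\nu)$, which let the exponents in the fractional sums telescope correctly while the base points line up.

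For part (i), I would start from Definition \ref{def1}, writing
$\Delta_{a+1-\mu}^{-\mu}[\Delta_{a}^{\mu,\nu}f(x)]=\Delta_{a+1-\mu}^{-\mu}\Delta_{a+(1-\nu)(1-\mu)}^{-\nu(1-\mu)}\Delta\Delta_{a}^{-(1-\nu)(1-\mu)}f(x)$,
and then collapse the outermost two fractional sums using the standard composition law for delta fractional sums (the Holm/Goodrich composition rule), namely $\Delta_{b+\beta}^{-\alpha}\Delta_{b}^{-\beta}=\Delta_{b}^{-(\alpha+\beta)}$. Here $b=a+(1-\nu)(1-\mu)$, $\beta=\nu(1-\mu)$, $\alpha=\mu$, and one checks $b+\beta=a+1-\mu$, producing the claimed $\Delta_{a+(1-\nu)(1-\mu)}^{-(\mu+\nu-\mu\nu)}$ on the right. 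Part (ii) then follows from part (i) immediately by re-reading the tail $\Delta\Delta_{a}^{-(1-\nu)(1-\mu)}f(x)=\Delta\Delta_{a}^{-(1-(\mu+\nu-\mu\nu))}f(x)$ as the Riemann--Liouville difference $\Delta_{a}^{\mu+\nu-\mu\nu}f(x)$ via Definition \ref{def202} with $m=1$, using $(1-\mu)(1-\nu)=1-\eta$.

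For part (iii), I would apply Definition \ref{def1} to $g(x)=\Delta_{a}^{-\mu}f(x)$ at the base point $a+\mu$:
$\Delta_{a+\mu}^{\mu,\nu}g(x)=\Delta_{a+\mu+(1-\nu)(1-\mu)}^{-\nu(1-\mu)}\Delta\Delta_{a+\mu}^{-(1-\nu)(1-\mu)}\Delta_{a}^{-\mu}f(x)$.
The inner composition $\Delta_{a+\mu}^{-(1-\nu)(1-\mu)}\Delta_{a}^{-\mu}$ is again of the form covered by the composition law, collapsing to $\Delta_{a}^{-(\mu+(1-\nu)(1-\mu))}=\Delta_{a}^{-(1-\nu(1-\mu))}$; applying $\Delta$ recovers $\Delta_{a}^{\nu(1-\mu)}f(x)$ by Definition \ref{def202}. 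A quick arithmetic check $\mu+(1-\nu)(1-\mu)=1-\nu+\mu\nu$ confirms that the outer starting index is $a+1-\nu+\mu\nu$, matching the statement.

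Part (iv) is the genuine content. Setting $\beta=\nu(1-\mu)$, part (iii) reduces the claim to the inverse relation
$\Delta_{a+1-\beta}^{-\beta}\Delta_{a}^{\beta}f(x)=f(x)-h_{\beta-1}(x,a+1-\beta)\,\Delta_{a}^{-(1-\beta)}f(a+1-\beta)$.
To prove this I would write $\Delta_{a}^{\beta}f=\Delta G$ with $G=\Delta_{a}^{-(1-\beta)}f$ and push the $\Delta$ past the outer sum via the discrete Leibniz-type identity
$\Delta_{b}^{-\beta}\Delta G(x)=\Delta\Delta_{b}^{-\beta}G(x)-h_{\beta-1}(x,b)G(b)$
at $b=a+1-\beta$, which is the standard delta-calculus counterpart of integration by parts applied to $h_{\beta-1}(x,\sigma(\tau))$ together with Lemma \ref{lem202}. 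The remaining term $\Delta\Delta_{a+1-\beta}^{-\beta}\Delta_{a}^{-(1-\beta)}f$ collapses via the composition law to $\Delta\Delta_{a}^{-1}f=f$, giving the identity. The main obstacle I expect is the bookkeeping in this last step: carefully justifying the summation-by-parts rearrangement on the isolated time scale and making sure the boundary term $h_{\beta-1}(x,a+1-\beta)G(a+1-\beta)$ is the one that survives rather than a ghost produced by a mismatched empty-sum convention.
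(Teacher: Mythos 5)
Your proposal is correct and follows essentially the same route as the paper: parts (i)--(iii) are obtained exactly as in the text, by applying Definition \ref{def1} and collapsing adjacent fractional sums with the composition rule $\Delta_{b+\beta}^{-\alpha}\Delta_{b}^{-\beta}=\Delta_{b}^{-(\alpha+\beta)}$ (Holm's Theorem 5) together with the identification $\Delta\Delta_a^{-(1-\gamma)}f=\Delta_a^{\gamma}f$. The only difference is in (iv), where the paper simply invokes the second part of Holm's Theorem 8 for $\Delta_{a+1-\beta}^{-\beta}\Delta_a^{\beta}f$, while you re-derive that identity from the Leibniz rule $\Delta_b^{-\beta}\Delta G(x)=\Delta\Delta_b^{-\beta}G(x)-h_{\beta-1}(x,b)G(b)$ and the composition law --- which is precisely the standard proof of the cited theorem, so the content is the same.
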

\begin{proof}
	$(i)$  On the  left hand side we  use Definition \ref{def1} and (Theorem 5 \cite{Holm}) to obtain
	\begin{equation*}\begin{split}
	\Delta_{a+1-\mu}^{-\mu}[\Delta_{a}^{\mu,\nu}f(x)]=&\Delta_{a+1-\mu}^{-\mu}[\Delta_{a+(1-\nu)(1-\mu)}^{-\nu(1-\mu)}\Delta  \Delta_{a}^{-(1-\nu)(1-\mu)} f(x)]
	\\=&\Delta_{a+(1-\nu)(1-\mu)}^{-(\mu+\nu-\mu\nu)}\Delta  \Delta_{a}^{-(1-\nu)(1-\mu)} f(x).
	\end{split}\end{equation*}
	$(ii)$  On the left hand side, use $(i)$  and  first part of (Lemma 6 \cite{Holm}),
	\begin{equation*}\begin{split}
	\Delta_{a+1-\mu}^{-\mu}[\Delta_{a}^{\mu,\nu}f(x)]=&\Delta_{a+(1-\nu)(1-\mu)}^{-(\mu+\nu-\mu\nu)}\Delta  \Delta_{a}^{-(1-\nu)(1-\mu)} f(x)\\=&\Delta_{a+(1-\nu)(1-\mu)}^{-(\mu+\nu-\mu\nu)} \Delta_{a}^{\mu+\nu-\mu\nu}f(x).
	\end{split}\end{equation*}	
	
	\noindent$(iii)$  Using Definition \ref{def1} and (Theorem 5 \cite{Holm}) we get
	\begin{equation*}\begin{split}
	\Delta_{a+\mu}^{\mu,\nu}[\Delta_{a}^{-\mu}f(x)]=&\Delta_{a+\mu+(1-\nu)(1-\mu)}^{-\nu(1-\mu)}\Delta    \Delta_{a+\mu}^{-(1-\nu)(1-\mu)}[\Delta_{a}^{-\mu}f(x)]
	\\=&\Delta_{a+(1-\nu+\mu\nu)}^{-\nu(1-\mu)}\Delta  \Delta_{a}^{-(1-\nu+\mu\nu)}f(x)
	\\=&\Delta_{a+(1-\nu+\mu\nu)}^{-\nu(1-\mu)} \Delta_{a}^{\nu(1-\mu)}f(x).
	\end{split}\end{equation*}	
	In preceding step we also used first part of (Lemma 6 \cite{Holm}).\\ \noindent
	$(iv)$  Consider the left hand side, use $(iii)$  and  second part of (Theorem 8 \cite{Holm}),
	\begin{equation*}\begin{split}
	\Delta_{a+\mu}^{\mu,\nu}[\Delta_{a}^{-\mu}f(x)]=
	&\Delta_{a+(1-\nu+\mu\nu)}^{-\nu(1-\mu)} \Delta_{a}^{\nu(1-\mu)}f(x)\\=&\Delta_{a+1-\nu(1-\mu)}^{-\nu(1-\mu)} \Delta_{a}^{\nu(1-\mu)}f(x)\\=&	
	f(x)-\Delta_{a}^{-(1-\nu(1-\mu))} f(a+1-\nu(1-\mu)) \\&\times h_{\nu(1-\mu)-1}(x, a+1-\nu(1-\mu)).\qedhere
	\end{split}\end{equation*}	
\end{proof}
For nonempty set $N_{a}^{T}$, the set of all real valued bounded functions  $B(N_{a}^{T})$ is a norm space with $||f||= \sup_{x\in \mathbb{N}_{a}^{T}}\{f(x)\}.$ We consider  a weighted space of bounded functions $B_{\lambda}(N_{a}^{T}):=\{f:N_{a}^{T} \to \mathbb{R}; | (x-a-\mu)\fallingfactorial{\lambda} f(x)|< M \}$, with $0\le \lambda< \mu$ and $M>0.$ The  weighted space of bounded functions is considered for finding left inverse property, however analysis in the following sections is not influenced by this space.
\begin{lemma}\label{lem302} 	
	Let $f \in B_{\lambda}(N_{a}^{T})$ be given and $0<\lambda \le 1$. Then
	$\Delta_{a}^{-\mu}f(a+\mu)=0,$ for $0\le \lambda <\mu.$
\end{lemma}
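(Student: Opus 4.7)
The plan is to evaluate $\Delta_{a}^{-\mu}f(a+\mu)$ directly from Definition~\ref{def201} and then collapse the result using the weight restriction that defines the space $B_{\lambda}(N_{a}^{T})$.

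First, I will substitute $x=a+\mu$ into the defining sum. At this endpoint the upper summation limit $x-\mu$ equals the lower limit $a$, so the sum reduces to (at most) a single term,
\[
\Delta_{a}^{-\mu}f(a+\mu)=h_{\mu-1}(a+\mu,a+1)\,f(a),
\]
and I will simplify the kernel via the Gamma-ratio representation $t^{\underline{\alpha}}=\Gamma(t+1)/\Gamma(t-\alpha+1)$. The identity $(\mu-1)^{\underline{\mu-1}}=\Gamma(\mu)/\Gamma(1)=\Gamma(\mu)$ compresses $h_{\mu-1}(a+\mu,a+1)$ to an explicit constant depending only on $\mu$.

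Next, I will bring in the hypothesis $f\in B_{\lambda}(N_{a}^{T})$, namely $|(x-a-\mu)^{\underline{\lambda}}f(x)|<M$ for all $x\in N_{a}^{T}$. Evaluating at the left endpoint $x=a$ converts this into a bound on $|f(a)|$ through the quantity $(-\mu)^{\underline{\lambda}}=\Gamma(1-\mu)/\Gamma(1-\mu-\lambda)$. Combining this with the explicit kernel value from the previous step yields an estimate on $|\Delta_{a}^{-\mu}f(a+\mu)|$ in terms of a Gamma-function quotient, which under the strict condition $0\le\lambda<\mu$ I would use to force the endpoint value down to zero.

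The step I expect to be the hardest is precisely this final collapse: the kernel is a fixed nonzero constant and the weight bound at $x=a$ only controls $|f(a)|$ by a finite multiple of $M$, so the vanishing must come from using $\lambda<\mu$ in an essential way. My guess is that the argument exploits either a boundary decay property built into the definition of $B_{\lambda}$ at the left endpoint, or a Gamma-function identity where the quotient $\Gamma(1-\mu)/\Gamma(1-\mu-\lambda)$ degenerates exactly in the regime $\lambda<\mu$, pinching $f(a)$, and hence $\Delta_{a}^{-\mu}f(a+\mu)$, to zero.
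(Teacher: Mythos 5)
There is a genuine gap, and you have in fact located it yourself: the ``final collapse'' you hope for never happens. Your first step is correct and even sharper than you state: at $x=a+\mu$ the defining sum has the single term $h_{\mu-1}(a+\mu,a+1)f(a)$, and $h_{\mu-1}(a+\mu,a+1)=(\mu-1)^{\underline{\mu-1}}/\Gamma(\mu)=\Gamma(\mu)/\bigl(\Gamma(1)\Gamma(\mu)\bigr)=1$, so your reduction shows the claim is literally $f(a)=0$. But neither of your two candidate mechanisms delivers this. The definition of $B_{\lambda}(N_{a}^{T})$ contains no separate boundary decay condition, and the weight at $x=a$ is $(-\mu)^{\underline{\lambda}}=\Gamma(1-\mu)/\Gamma(1-\mu-\lambda)$, which for $0\le\lambda<\mu<1$ is finite and generically nonzero: $\Gamma(1-\mu-\lambda)$ has a pole only when $1-\mu-\lambda$ is a nonpositive integer, i.e.\ only in the special case $\lambda=1-\mu$, not throughout the regime $\lambda<\mu$. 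So membership in $B_{\lambda}$ merely bounds $|f(a)|$ by a finite multiple of $M$; it does not pinch it to zero, and the argument cannot be completed along the lines you propose.

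The paper's proof takes a different route that never isolates the single surviving term: it majorizes $|f(\tau)|$ by $M$ times the reciprocal weight, pushes the fractional sum through this majorant using the power rule $\Delta_{a}^{-\mu}(x-a)^{\underline{-\lambda}}=\frac{\Gamma(1-\lambda)}{\Gamma(\mu-\lambda+1)}(x-a)^{\underline{\mu-\lambda}}$ (Lemma~\ref{lem1} with $\nu=-\lambda$), arriving at $|\Delta_{a}^{-\mu}f(x)|\le M\Gamma(1-\lambda)\,(x-a-\mu)^{\underline{\mu-\lambda}}/\Gamma(\mu-\lambda+1)$, and then invokes a ``limit process $x\to a+\mu$'', relying on the positive exponent $\mu-\lambda$ to annihilate the bound. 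Your direct computation is actually a useful stress test of that step too: on the discrete domain $\mathbb{N}_{a+\mu}$ the point $x=a+\mu$ is attained, and $(0)^{\underline{\mu-\lambda}}=\Gamma(1)/\Gamma(1+\lambda-\mu)\ne 0$ under the usual falling-power convention, so the vanishing is not automatic there either. As a review of your proposal, however, the verdict is that the one nontrivial step is missing, and the two mechanisms you suggest for supplying it do not work.
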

\begin{proof}
	Since $f \in B_{\lambda}(N_{a}^{T})$, thus for some positive integer $M,$ we have
	$ | (x-a-\mu)\fallingfactorial{\lambda} f(x)|< M,$ for each $x \in N_{a}^{T}.$ Therefore
	\begin{equation*}
	\begin{split} |\Delta_{a}^{-\mu}f(x)|
	<& M [\Delta_{a}^{-\mu} (y-a-\mu)\fallingfactorial{-\lambda} ](x)
	\\\le& M\Gamma(1-\lambda) \frac{(x-a-\mu)\fallingfactorial{\mu-\lambda}}{\Gamma(\mu-\lambda+1)}.
	\end{split}
	\end{equation*}
	In  the preceding step we used the fact  $\Delta_{a}^{-\mu} (x-a)\fallingfactorial{-\lambda}=(x-a)\fallingfactorial{\mu-\lambda}\frac{\Gamma(1-\lambda)}{\Gamma(\mu-\lambda+1)}.$ The desired result is achieved by applying limit process $x \to a+\mu$.
\end{proof}
\noindent Next we state the left inverse property.
\begin{lemma}\label{lem303}
	Assume $0 < \mu < 1$,  $0\leq \nu \leq 1$ and $\eta=\mu+\nu-\mu\nu$, then for  $f \in B_{1-\eta}(N_{a}^{T}),$
	$$\Delta_{a+\mu}^{\mu,\nu}[\Delta_{a}^{-\mu}f(x)]=f(x).$$
\end{lemma}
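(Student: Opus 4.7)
The plan is to leverage Lemma \ref{lem301} part $(iv)$, which already supplies the decomposition
\begin{equation*}
\Delta_{a+\mu}^{\mu,\nu}[\Delta_a^{-\mu}f(x)] = f(x) - \Delta_a^{-\alpha} f(a+\alpha)\, h_{\alpha-1}(x, a+\alpha),
\end{equation*}
where $\alpha := 1 - \nu(1-\mu)$. So the entire task reduces to showing that the correction term vanishes identically on $\mathbb{N}_{a+1}$.

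First I would do the arithmetic bookkeeping. Since $\eta = \mu + \nu - \mu\nu$, one checks at once that $1 - \eta = (1-\mu)(1-\nu)$ and $\alpha - (1-\eta) = \mu > 0$. Consequently the weight exponent $1-\eta$ of the space $B_{1-\eta}(N_a^T)$ is nonnegative and strictly smaller than $\alpha$. This is exactly the regime covered by Lemma \ref{lem302} (with its $\mu$ taken as $\alpha$ and its $\lambda$ as $1-\eta$): applying that lemma delivers $\Delta_a^{-\alpha} f(a+\alpha) = 0$, which when plugged back into the displayed identity collapses the correction term and leaves $\Delta_{a+\mu}^{\mu,\nu}[\Delta_a^{-\mu}f(x)] = f(x)$, as required.

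The delicate point I expect to wrestle with is the compatibility of the weighted space with a sum of nonmatching order. The definition of $B_\lambda(N_a^T)$ ties the weight $(x-a-\mu)^{\underline{\lambda}}$ to the problem's $\mu$, while here I need a vanishing statement about $\Delta_a^{-\alpha}$ evaluated at $a+\alpha$. If Lemma \ref{lem302} is read generically in its first parameter, the plan above goes through verbatim. Otherwise one rehashes its short proof in the present setting: bound $|f(y)|$ by $M\,(y-a-\mu)^{\underline{-(1-\eta)}}$, majorise $|\Delta_a^{-\alpha} f(x)|$ by a constant multiple of $(x-a-\alpha)^{\underline{\alpha-(1-\eta)}} = (x-a-\alpha)^{\underline{\mu}}$ via the monomial identity of Lemma \ref{lem1}, and send $x \to a+\alpha$; the strict positivity $\mu > 0$ is precisely what forces the limit to vanish. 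Either way, the algebraic identity $\alpha - (1-\eta) = \mu$ is the structural reason the left-inverse property holds.
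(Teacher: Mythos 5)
Your proposal is correct and follows essentially the same route as the paper: the paper likewise invokes part $(iv)$ of Lemma \ref{lem301} and kills the correction term by applying Lemma \ref{lem302} after checking $0\le 1-\eta< 1-\nu(1-\mu)$. Your extra care about reading Lemma \ref{lem302} with the sum order $1-\nu(1-\mu)$ in place of $\mu$ is a fair observation, and the paper implicitly uses exactly that generic reading.
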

\begin{proof} Since $0\le 1-\eta < 1-\nu(1-\mu)$. Thus Lemma \ref{lem302} gives
	$\Delta_{a}^{-(1-\nu+\mu\nu)} f(a+1-\nu+\mu\nu)=0.$ Hence the result follows from the part $(iv)$ of Lemma \ref{lem301}.
\end{proof}

\begin{theorem}	\label{thm9} Assume $f: \mathbb{N}_{a}  \to \mathbb{R}$ is of exponential order $r>1$ with $	\mathscr{L}_{a} \{ f(x)\} (y)$ $=\tilde{ F}_{a}(y)$ and $0 < \mu < 1$, $0\leq \nu \leq 1$. Then for $ |y+1|>r$ we have  the delta Laplace transform given as
	\begin{equation*}\begin{split}	\mathscr{L}_{a+1-\mu} \{\Delta_{a}^{\mu, \nu}f \} (y) =& y^{\mu}(y+1)^{1-\mu}\tilde{ F}_{a}(y)\\&-\frac{(y+1)^{\nu(1-\mu)}}{y^{\nu(1-\mu)}} \Delta_{a}^{-(1-\nu)(1-\mu)} f(a+(1-\nu)(1-\mu)).	
	\end{split}\end{equation*}
\end{theorem}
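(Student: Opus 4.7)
The plan is to peel off the three operators in the definition $\Delta_{a}^{\mu,\nu}f(x)=\Delta_{a+(1-\nu)(1-\mu)}^{-\nu(1-\mu)}\Delta\,\Delta_{a}^{-(1-\nu)(1-\mu)} f(x)$ one at a time, transforming at each stage with either Lemma \ref{lem7} (for fractional sums) or Lemma \ref{lem8} with $m=1$ (for the first-order forward difference). Because each lemma shifts the base point of the Laplace transform by the order of the sum being taken, the key bookkeeping is to check that after all three applications the base point ends up at exactly $a+(1-\nu)(1-\mu)+\nu(1-\mu)=a+1-\mu$, which matches the left-hand side.

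First I would set $g(x):=\Delta_{a}^{-(1-\nu)(1-\mu)} f(x)$ for $x\in\mathbb{N}_{a+(1-\nu)(1-\mu)}$ and, by Lemma \ref{lem7} with order $(1-\nu)(1-\mu)$, write
\begin{equation*}
\mathscr{L}_{a+(1-\nu)(1-\mu)}\{g\}(y)=\frac{(y+1)^{(1-\nu)(1-\mu)}}{y^{(1-\nu)(1-\mu)}}\tilde{F}_{a}(y).
\end{equation*}
Next I would apply Lemma \ref{lem8} with $m=1$, taking the base point to be $a+(1-\nu)(1-\mu)$, which gives
\begin{equation*}
\mathscr{L}_{a+(1-\nu)(1-\mu)}\{\Delta g\}(y)=y\,\frac{(y+1)^{(1-\nu)(1-\mu)}}{y^{(1-\nu)(1-\mu)}}\tilde{F}_{a}(y)-g\bigl(a+(1-\nu)(1-\mu)\bigr).
\end{equation*}
Finally I would apply Lemma \ref{lem7} once more, with order $\nu(1-\mu)$ and base $a+(1-\nu)(1-\mu)$, to the function $\Delta g$; this shifts the base point of the transform to $a+1-\mu$ and multiplies by $(y+1)^{\nu(1-\mu)}/y^{\nu(1-\mu)}$. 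Substituting the expression from the previous step and combining the two $(y+1)/y$ factors via $(y+1)^{\nu(1-\mu)+(1-\nu)(1-\mu)}=(y+1)^{1-\mu}$ and the analogous identity for $y$ yields the first term $y^{\mu}(y+1)^{1-\mu}\tilde{F}_{a}(y)$, while the boundary term picks up the single prefactor $(y+1)^{\nu(1-\mu)}/y^{\nu(1-\mu)}$, reproducing the claimed formula exactly.

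The only real obstacle is the base-point arithmetic: one must verify that Lemma \ref{lem7} is being invoked with a function whose domain starts at the stated base point, and that Lemma \ref{lem8} tolerates a non-integer starting point $a+(1-\nu)(1-\mu)$ (it does, since the lemma is stated for any $\mathbb{N}_{a}$ with $a\in\mathbb{R}$). A subsidiary concern is the growth assumption: each $\Delta^{-\alpha}$ and $\Delta$ applied to a function of exponential order $r>1$ again has exponential order at most $r$, so the convergence hypothesis $|y+1|>r$ required by Lemmas \ref{lem7} and \ref{lem8} propagates through all three compositions without modification. The algebraic simplification at the end is routine.
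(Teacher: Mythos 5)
Your proposal is correct and follows essentially the same route as the paper: the paper likewise transforms the three nested operators by applying Lemma \ref{lem7} to the outer sum of order $\nu(1-\mu)$, Lemma \ref{lem8} with $m=1$ to the forward difference (producing the boundary term $\Delta_{a}^{-(1-\nu)(1-\mu)}f(a+(1-\nu)(1-\mu))$), and Lemma \ref{lem7} again to the inner sum of order $(1-\nu)(1-\mu)$, then combines the exponents exactly as you do. The only difference is cosmetic — the paper peels the operators from the outside in rather than building up from the inside out — and your base-point bookkeeping $a+(1-\nu)(1-\mu)+\nu(1-\mu)=a+1-\mu$ matches theirs.
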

Proof:   Considering the left hand  side  and using the Lemmas,	\ref{lem7} and 	\ref{lem8},	
\begin{equation*}\begin{split}
\mathscr{L}_{a+1-\mu} \{\Delta_{a}^{\mu, \nu}f \} (y) =&\mathscr{L}_{a+1-\mu} [\Delta_{a+(1-\nu)(1-\mu)}^{-\nu(1-\mu)}\Delta \Delta_{a}^{-(1-\nu)(1-\mu)} f(x)	] (y)\\=&\frac{(y+1)^{\nu(1-\mu)}}{y^{\nu(1-\mu)}} \mathscr{L}_{a+(1-\nu)(1-\mu)} [\Delta \Delta_{a}^{-(1-\nu)(1-\mu)} f(x)	] (y)\\=&\frac{(y+1)^{\nu(1-\mu)}}{y^{\nu(1-\mu)}}\Big[ y\mathscr{L}_{a+(1-\nu)(1-\mu)} [ \Delta_{a}^{-(1-\nu)(1-\mu)} f(x)	] (y)\\&- \Delta_{a}^{-(1-\nu)(1-\mu)} f(a+(1-\nu)(1-\mu))	\Big]
\\=&\frac{(y+1)^{\nu(1-\mu)}}{y^{\nu(1-\mu)}}\Big[ y\frac{(y+1)^{(1-\nu)(1-\mu)}}{y^{(1-\nu)(1-\mu)}}\mathscr{L}_{a} [ f(x)	] (y)\\&- \Delta_{a}^{-(1-\nu)(1-\mu)} f(a+(1-\nu)(1-\mu))	\Big]
\\=& y^{\mu}(y+1)^{1-\mu}\tilde{ F}_{a}(y)\\&-\frac{(y+1)^{\nu(1-\mu)}}{y^{\nu(1-\mu)}} \Delta_{a}^{-(1-\nu)(1-\mu)} f(a+(1-\nu)(1-\mu)).	
\end{split}\end{equation*}
\begin{remark}
	Notice that, if in Theorem 	\ref{thm9}, we set $\nu=0$ then we recover Theorem 2.70 in \cite{Goodrich}. Further, if we set $\nu=1$ we obtain the delta Laplace transform for the Caputo fractional difference.
\end{remark}
\section{Fixed point operators for initial value problem}
To establish existence theory  for Hilfer fractional difference equation with initial conditions, we transforms the problem to an equivalent summation equation which in turn defined an appropriate fixed point operator.
\begin{lemma}\label{lem402} 	
	Let $g:[a,T]_{\mathbb{N}_{a}}\times\mathbb{R}\to \mathbb{R}$ be given and $0<\mu < 1$, $0\le \nu \le 1$. Then $u$ solves
	system \eqref{eq414} if and only if	
	\begin{equation*}
	\begin{split}
	u(x)=\zeta h_{\eta-1}(x,a+1-\eta)- \Delta_{a+1-\mu}^{-\mu}
	g(x+\mu-1, u(x+\mu-1)),
	\end{split}
	\end{equation*}~for ~all~ $x\in \mathbb{N}_{a+1}.$
\end{lemma}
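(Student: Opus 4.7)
The plan is to transform the IVP \eqref{eq414} into the stated summation equation by applying the fractional sum $\Delta_{a+1-\mu}^{-\mu}$ to the Hilfer difference equation and invoking the composition identities already set up in Lemma \ref{lem301}. Throughout I exploit the identifications $(1-\nu)(1-\mu) = 1 - \eta$ and $\mu + \nu - \mu\nu = \eta$, which are the substitutions that make Lemma \ref{lem301} match the present notation.

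For necessity, start from $\Delta_a^{\mu,\nu} u(x) = -g(x+\mu-1, u(x+\mu-1))$ on $\mathbb{N}_{a+1-\mu}$ and apply $\Delta_{a+1-\mu}^{-\mu}$ to both sides. Lemma \ref{lem301}(i) rewrites the left-hand side as $\Delta_{a+1-\eta}^{-\eta} \Delta \Delta_a^{-(1-\eta)} u(x)$, which by Definition \ref{def202} is $\Delta_{a+1-\eta}^{-\eta} \Delta_a^{\eta} u(x)$. The Riemann--Liouville inverse relation
\[
\Delta_{a+1-\beta}^{-\beta} \Delta_a^{\beta} f(x) = f(x) - \Delta_a^{-(1-\beta)} f(a+1-\beta)\, h_{\beta-1}(x, a+1-\beta),
\]
which is the second part of Theorem~8 of \cite{Holm} (and was already used to close the proof of Lemma \ref{lem301}(iv)), applied with $\beta = \eta$ and $f = u$, together with the initial condition $\Delta_a^{-(1-\eta)} u(a+1-\eta) = \zeta$, reduces the left side to $u(x) - \zeta\, h_{\eta-1}(x, a+1-\eta)$. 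Rearranging yields the claimed summation equation on $\mathbb{N}_{a+1}$.

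For sufficiency, apply $\Delta_a^{\mu,\nu}$ to both sides of the summation equation. A direct unpacking of Definition \ref{def1} together with the power rule of Lemma \ref{lem1} shows that the Taylor monomial is annihilated, namely $\Delta_a^{\mu,\nu}\bigl[h_{\eta-1}(\cdot, a+1-\eta)\bigr](x) = 0$; this is the discrete analog of the continuous identity $D^{\mu,\nu}(t-a)^{\eta-1} = 0$. The left-inverse property (Lemma \ref{lem303}), applied after shifting the base point from $a$ to $a+1-\mu$, collapses $\Delta_a^{\mu,\nu}\bigl[\Delta_{a+1-\mu}^{-\mu} g(\cdot+\mu-1, u(\cdot+\mu-1))\bigr](x)$ to $g(x+\mu-1, u(x+\mu-1))$, which recovers the Hilfer equation. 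The initial condition is recovered separately by applying $\Delta_a^{-(1-\eta)}$ to the summation equation and evaluating at $x = a+1-\eta$: the kernel term simplifies to $\zeta$ by Lemma \ref{lem1}, while the summation term is an empty sum at that point.

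The main obstacle is the careful bookkeeping of starting points and domains across the compositions — in particular, reconciling the shift discrepancy between the operator $\Delta_a^{\mu,\nu}$ appearing in \eqref{eq414} and the operator $\Delta_{a+1}^{\mu,\nu}$ that one obtains by naively shifting Lemma \ref{lem303}. The verification that the Hilfer difference annihilates the kernel $h_{\eta-1}(x, a+1-\eta)$ is the other technical check, but it reduces to a short calculation using the generalized falling-factorial power law.
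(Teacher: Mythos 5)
Your forward direction is precisely the paper's proof: the paper dispatches the whole lemma in one sentence by citing Lemma \ref{lem301}$(i)$--$(ii)$ and the second part of Theorem~8 of \cite{Holm}, which is exactly the chain you run (apply $\Delta_{a+1-\mu}^{-\mu}$, rewrite the left side as $\Delta_{a+1-\eta}^{-\eta}\Delta_{a}^{\eta}u$ using $(1-\nu)(1-\mu)=1-\eta$, invert via Theorem~8, and insert the initial condition). The paper does not argue the converse at all, so everything you add for sufficiency goes beyond the source. That sketch is sound in outline --- the annihilation $\Delta_{a}^{\mu,\nu}h_{\eta-1}(\cdot,a+1-\eta)=0$ is a correct short computation, since Lemma \ref{lem1} gives $\Delta_{a}^{-(1-\eta)}h_{\eta-1}(x,a+1-\eta)=1$ and $\Delta$ kills the constant --- but you leave two loose ends. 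First, the base-point discrepancy you flag as ``the main obstacle'' does need closing, and it closes easily: $F(x)=\Delta_{a+1-\mu}^{-\mu}g(x+\mu-1,u(x+\mu-1))$ vanishes at $x=a$ by the empty-sum convention, so the $\tau=a$ term drops out of $\Delta_{a}^{-(1-\eta)}F$ and hence $\Delta_{a}^{\mu,\nu}F=\Delta_{a+1}^{\mu,\nu}F$; stating this would complete that step. Second, Lemma \ref{lem303} carries the hypothesis $f\in B_{1-\eta}(N_a^T)$, which you do not verify for the composite $g(\cdot+\mu-1,u(\cdot+\mu-1))$; since that hypothesis is exactly what kills the boundary term in Lemma \ref{lem301}$(iv)$, the cleaner route is to invoke $(iv)$ directly and check that its boundary term vanishes in your setting rather than to cite the left-inverse lemma wholesale. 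With those two points made explicit your argument is complete and strictly more detailed than the paper's.
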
	
\noindent The proof of above lemma is an implication of  Lemma \ref{lem301} $(i)$ and $(ii)$  and  second part of Theorem 8 in \cite{Holm}.
In next result the  Brouwer's fixed Point theorem \cite{Chen} is utilized for establishing  existence conditions.
The set $Z$ of all real sequences  $u=\{u(x)\}_{x=a}^{T},$ with $||u||=\adjustlimits\sup_{x\in \mathbb{N}_{a}^{T}}|u(x)|$ is a Banach space.\\
Using Definition \ref{def201} and Lemma \ref{lem402} we define an  operator
$\mathcal{A}:Z \to Z$ by
\begin{equation}\label{eq401}\begin{split}\mathcal{A}u(x)
=&\zeta h_{\eta-1}(x,a+1-\eta)-
\sum_{\tau=a+1-\mu}^{x-\mu} h_{\mu-1}(x,\sigma (\tau))
g(\tau+\mu-1, u(\tau+\mu-1)).   \end{split}\end{equation}
The  fixed points of  $\mathcal{A}$  coincides with the solutions of the  problem \eqref{eq414}.
\begin{theorem}\label{thm402}	Let 	 $f:[a,T]_{\mathbb{N}_{a}}\to \mathbb{R}$ be  a bounded function in such a way that $|g(x,u)|\leq f(x)|u|$ for all $u \in Z.$ Then IVP \eqref{eq414} has at least one solution on $Z$, provided
	\begin{equation}\label{eq402}L^{*}\leq \frac{\Gamma(\mu +1 )}{(T-a-1+\mu)\fallingfactorial{\mu}},\end{equation}
	where $L^{*}=\adjustlimits\sup_{x\in \mathbb{N}_{a+1-\mu}^{T}}f(x+\mu-1)$.
\end{theorem}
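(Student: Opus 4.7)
The plan is to apply Brouwer's fixed point theorem to the operator $\mathcal{A}$ defined in \eqref{eq401}. Since $Z$ is the space of real sequences indexed by the finite set $\mathbb{N}_{a}^{T}$, it is finite-dimensional, so any closed norm ball $B_{R}=\{u\in Z:\|u\|\le R\}$ is automatically compact and convex. Thus I only have to verify two things: that $\mathcal{A}$ is continuous and that $\mathcal{A}(B_{R})\subseteq B_{R}$ for a suitable $R$. Continuity of $\mathcal{A}$ reduces to continuity of $u\mapsto g(\tau+\mu-1,u(\tau+\mu-1))$ in each coordinate, which is a standing assumption implicit in the statement, and the sum in \eqref{eq401} is finite, so this step is immediate.

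The heart of the argument is the self-map estimate. Starting from \eqref{eq401}, the hypothesis $|g(x,u)|\le f(x)|u|$ and the definition of $L^{*}$ give
\begin{equation*}
|\mathcal{A}u(x)|\le |\zeta|\,|h_{\eta-1}(x,a+1-\eta)|+L^{*}\|u\|\sum_{\tau=a+1-\mu}^{x-\mu}h_{\mu-1}(x,\sigma(\tau)).
\end{equation*}
The key observation is that the remaining sum is exactly $\Delta_{a+1-\mu}^{-\mu}[1](x)$ in the sense of Definition \ref{def201}. Applying Lemma \ref{lem1} with the base point shifted from $a$ to $a+1-\mu$ and with its parameter $\nu$ set to $0$, I obtain
\begin{equation*}
\sum_{\tau=a+1-\mu}^{x-\mu}h_{\mu-1}(x,\sigma(\tau))=\frac{(x-a-1+\mu)\fallingfactorial{\mu}}{\Gamma(\mu+1)},\qquad x\in\mathbb{N}_{a+1}.
\end{equation*}
Since the generalized falling function is nondecreasing in its argument on the relevant range, taking a supremum over $x\in\mathbb{N}_{a}^{T}$ yields $\|\mathcal{A}u\|\le|\zeta|H+L^{*}\|u\|\frac{(T-a-1+\mu)\fallingfactorial{\mu}}{\Gamma(\mu+1)}$, where $H:=\sup_{x\in\mathbb{N}_{a}^{T}}|h_{\eta-1}(x,a+1-\eta)|$ is finite.

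To close the argument I choose $R$ large enough that $|\zeta|H+L^{*}R\tfrac{(T-a-1+\mu)\fallingfactorial{\mu}}{\Gamma(\mu+1)}\le R$; by the hypothesis \eqref{eq402} the coefficient of $R$ is at most $1$, so such an $R$ exists (one simply solves $R\ge|\zeta|H\big/\bigl(1-L^{*}\tfrac{(T-a-1+\mu)\fallingfactorial{\mu}}{\Gamma(\mu+1)}\bigr)$, with the degenerate equality case reducing to choosing any $R\ge|\zeta|H$ when $\zeta=0$). With $\mathcal{A}(B_{R})\subseteq B_{R}$ established and $\mathcal{A}$ continuous on the compact convex set $B_{R}\subset\mathbb{R}^{T-a+1}$, Brouwer's theorem produces a fixed point, which by Lemma \ref{lem402} is a solution of \eqref{eq414}. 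The main obstacle is really bookkeeping: correctly identifying the raw sum as $\Delta_{a+1-\mu}^{-\mu}[1](x)$ and applying Lemma \ref{lem1} with the shifted base point to extract the $\Gamma(\mu+1)$ denominator that matches \eqref{eq402}.
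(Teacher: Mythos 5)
Your proposal follows essentially the same route as the paper: Brouwer's fixed point theorem applied to the operator $\mathcal{A}$ of \eqref{eq401}, with the self-map estimate obtained from $|g(x,u)|\le f(x)|u|$ and the evaluation $\sum_{\tau=a+1-\mu}^{x-\mu}h_{\mu-1}(x,\sigma(\tau))=\frac{(x-a-1+\mu)\fallingfactorial{\mu}}{\Gamma(\mu+1)}$, finishing by monotonicity of the falling factorial at $x=T$. You are in fact more explicit than the paper about the points it leaves tacit (continuity of $\mathcal{A}$ and compactness/convexity of the ball in the finite-dimensional space $Z$), and your identification of the raw sum as $\Delta_{a+1-\mu}^{-\mu}[1](x)$ via Lemma \ref{lem1} is exactly the identity the paper uses implicitly (and spells out in the proof of Theorem \ref{thm403}).

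The one substantive divergence is the choice of invariant set, and it creates a genuine gap relative to the statement as written. The paper works on the shifted ball $W=\{u:\|u-\zeta h_{\eta-1}(\cdot,a+1-\eta)\|\le M\}$ so that the inhomogeneous term cancels from the estimate; you work on $B_R$ centred at the origin and must absorb $|\zeta|H$ by enlarging $R$, which forces $1-L^{*}\frac{(T-a-1+\mu)\fallingfactorial{\mu}}{\Gamma(\mu+1)}>0$. Hypothesis \eqref{eq402} is a non-strict inequality and $\zeta$ is arbitrary, so the case of equality with $\zeta\ne 0$ is permitted by the theorem but not covered by your argument; your own patch handles equality only when $\zeta=0$. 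To close this you should either strengthen \eqref{eq402} to a strict inequality or adopt the shifted ball. Be aware, though, that the paper's proof is itself defective at exactly this point: for $u\in W$ it replaces $\|u-0\|=\|u\|$ by $M$, which is legitimate only when $\zeta=0$, since the correct bound on the shifted ball is $\|u\|\le M+|\zeta|\sup_{x}|h_{\eta-1}(x,a+1-\eta)|$. So neither proof honestly delivers the borderline case; yours is at least transparent about where the estimate becomes tight.
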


\begin{proof}
	For $M>0$, define the set
	$$W=\{u:  ||u- \zeta h_{\eta-1}(x,a+1-\eta) ||\leq M,~ \text{for}~ x\in \mathbb{N}_{a+1-\mu}^{T} \}. $$
	To prove this theorem  we just have to show that $\mathcal{A}$ maps $W$ into itself.
	For $u \in W$, we have $\Big{|}\mathcal{A}u(x)-\zeta  h_{\eta-1}(x,a+1-\eta)\Big{|}$
	\begin{equation*}\begin{split}
	\leq&  f(x+\mu-1)\sum_{\tau=a+1-\mu}^{x-\mu} h_{\mu-1}(x,\sigma (\tau))  |u(\tau+\mu-1)-0| \\
	\leq& L^{*} \adjustlimits\sup_{x\in \mathbb{N}_{a+1-\mu}^{T}}|u(x+\mu-1)-0|\sum_{\tau=a+1-\mu}^{x-\mu} h_{\mu-1}(x,\sigma (\tau))   \\
	=& L^{*} ||u-0|| \Big[\frac{(x-a-1+\mu)\fallingfactorial{\mu}}{\Gamma(\mu +1 )}-0\Big]    \\
	\le& L^{*} M \frac{(T-a-1+\mu)\fallingfactorial{\mu}}{\Gamma(\mu +1 )}\le M.   \end{split}\end{equation*}
	We have   $||\mathcal{A}u||\leq M$
	which
	implies that $\mathcal{A}$ is self map. Therefore by Brouwer's fixed point
	theorem  $\mathcal{A}$ has at least one fixed point.
\end{proof}

\begin{theorem}\label{thm403}	For $K>0$ and $u,v \in Z$ assume that    $|g(x,u)- g(x,v)|\leq K|u-v|$,  for all  $x\in [a,T]_{\mathbb{N}_{a}}.$  Then IVP \eqref{eq414} has  unique solution on $Z$, provided
	\begin{equation}\label{eq403}K <\frac{\Gamma(\mu +1 )}{(T-a-1+\mu)\fallingfactorial{\mu}}.\end{equation}
\end{theorem}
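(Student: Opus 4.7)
The plan is to apply the Banach contraction principle to the operator $\mathcal{A}: Z \to Z$ defined in equation \eqref{eq401}, since by Lemma \ref{lem402} its fixed points coincide with solutions of the IVP \eqref{eq414}, and $Z$ with the supremum norm is a Banach space. The smallness condition \eqref{eq403} is precisely what forces $\mathcal{A}$ to be a strict contraction, so uniqueness (and in fact existence) follows immediately.

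First I would take arbitrary $u,v \in Z$ and form the difference $\mathcal{A}u(x)-\mathcal{A}v(x)$. The key observation is that the initial-condition term $\zeta\, h_{\eta-1}(x,a+1-\eta)$ cancels, leaving only the fractional-sum piece:
\begin{equation*}
\mathcal{A}u(x)-\mathcal{A}v(x)= -\sum_{\tau=a+1-\mu}^{x-\mu} h_{\mu-1}(x,\sigma(\tau))\bigl[g(\tau+\mu-1,u(\tau+\mu-1))-g(\tau+\mu-1,v(\tau+\mu-1))\bigr].
\end{equation*}
Applying the Lipschitz hypothesis termwise bounds each summand by $K\,|u(\tau+\mu-1)-v(\tau+\mu-1)| \le K\,\|u-v\|$, and pulling $K\,\|u-v\|$ outside reduces the problem to evaluating $\sum_{\tau=a+1-\mu}^{x-\mu} h_{\mu-1}(x,\sigma(\tau))$.

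Next I would evaluate that sum exactly as in the proof of Theorem \ref{thm402}, using Lemma \ref{lem1} (or equivalently the identity $\Delta_{a+1-\mu}^{-\mu} 1 = \frac{(x-a-1+\mu)\fallingfactorial{\mu}}{\Gamma(\mu+1)}$), yielding
\begin{equation*}
|\mathcal{A}u(x)-\mathcal{A}v(x)| \le K\,\|u-v\|\,\frac{(x-a-1+\mu)\fallingfactorial{\mu}}{\Gamma(\mu+1)} \le K\,\frac{(T-a-1+\mu)\fallingfactorial{\mu}}{\Gamma(\mu+1)}\,\|u-v\|,
\end{equation*}
since the falling factorial $(x-a-1+\mu)\fallingfactorial{\mu}$ is monotone on $\mathbb{N}_{a+1-\mu}^{T}$. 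Taking the supremum over $x\in \mathbb{N}_{a+1-\mu}^{T}$ gives $\|\mathcal{A}u-\mathcal{A}v\| \le L\,\|u-v\|$ with $L := K\,(T-a-1+\mu)\fallingfactorial{\mu}/\Gamma(\mu+1)$, and condition \eqref{eq403} yields $L<1$.

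The Banach contraction principle then delivers a unique fixed point of $\mathcal{A}$ in $Z$, which is the unique solution of \eqref{eq414}. No serious obstacle is anticipated: the argument is essentially parallel to the bound already produced in Theorem \ref{thm402}, with the only subtlety being the care needed to verify that $\mathcal{A}$ maps $Z$ into itself (so the contraction machinery applies), which follows from the same falling-factorial estimate together with boundedness of $g(\cdot,0)$ on the finite set $[a,T]_{\mathbb{N}_{a}}$ implied by the Lipschitz hypothesis.
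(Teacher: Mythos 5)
Your proposal is correct and follows essentially the same route as the paper: form $\mathcal{A}u-\mathcal{A}v$, apply the Lipschitz bound termwise, evaluate $\sum_{\tau=a+1-\mu}^{x-\mu}h_{\mu-1}(x,\sigma(\tau))$ as $(x-a-1+\mu)\fallingfactorial{\mu}/\Gamma(\mu+1)$, take the supremum, and invoke the Banach contraction principle under condition \eqref{eq403}. The only cosmetic difference is that you evaluate the Taylor-monomial sum via Lemma \ref{lem1} whereas the paper uses the antidifference identity together with Lemma \ref{lem202}; these are equivalent, and your explicit observation that the contraction constant is strictly less than $1$ is in fact slightly cleaner than the paper's statement.
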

\begin{proof}
	Let $u,v \in Z$ and	 $x\in [a,T]_{\mathbb{N}_{a}}$, we have by assumption
	\begin{equation*}\begin{split}
	\Big{|}\mathcal{A}u(x)-\mathcal{A}v(x)\Big{|}\leq&  \Big{|}
	\sum_{\tau=a+1-\mu}^{x-\mu} h_{\mu-1}(x,\sigma (\tau))\Big{|}
	\\ &\times | g(\tau+\mu-1, u(\tau+\mu-1))-g(\tau+\mu-1, v(\tau+\mu-1))|
	\\ \leq& \frac{ |0-(x-a-1+\mu)\fallingfactorial{\mu}|
	}{\Gamma(\mu+1)}  K|  u(\tau+\mu-1)- v(\tau+\mu-1)|.   \end{split}\end{equation*}
	In the preceding step, we used  $\sum_{\tau} h_{\nu-1}(x,\sigma (\tau))=-h_{\nu}(x,\tau)$ and Lemma \ref{lem202}.
	Now taking supremum on both sides we have
	$$\sup_{x\in \mathbb{N}_{a}^{T}}\Big{|}\mathcal{A}u(x)-\mathcal{A}v(x)\Big{|}\leq\frac{ K(T-a-1+\mu)\fallingfactorial{\mu}
	}{\Gamma(\mu+1)}  ||u-v ||.$$
	Using  inequality  \eqref{eq403}, we get   $||\mathcal{A}u-\mathcal{A}v||\leq ||u-v ||$
	which implies $\mathcal{A}$ is contraction. Therefore by Banach fixed point theorem   $\mathcal{A}$ has unique fixed point.
\end{proof}	

\noindent To illustrate the usefulness of Theorem , we present the following example.							
\begin{example} Consider the  fractional Hilfer difference equation with  initial conditions involving Reimann-Liouville fractional sum
	\begin{equation*}
	\begin{split}
	\begin{cases}
	-\Delta_{0.3}^{0.7, 0.5}u(x)=(x-0.3)u(x-0.3),
	~~~~~~~ x\in [0.3, 9.3]_{\mathbb{N}_{0.3}}\\
	\Delta_{0.3}^{-(0.15)}u(0.45)=\zeta.
	\end{cases}\end{split}
	\end{equation*}
	Here $a=0.3$,  $T=9.3$, $\mu=0.7$ and $\nu=0.5$. Therefore $\eta=0.85$. Thus  for  $K< 0.1974$, the solution to the given problem  with inequalities \begin{equation*}
	\Big{|}\Delta_{0.3}^{0.7, 0.5}v(x)+(x-0.3) v(x-0.3)\Big{|} \le \epsilon
	~~~~x\in [0.3, 9.3]_{\mathbb{N}_{0.3}},\end{equation*}
	\begin{equation*}
	\Big{|}\Delta_{0.3}^{0.7, 0.5}v(x)+(x-0.3) v(x-0.3)\Big{|} \le \epsilon \psi (x-0.3)
	~~~~ x\in [0.3, 9.3]_{\mathbb{N}_{0.3}},\end{equation*} is  Ulam-Hyers stable  and Ulam-Hyers-Rassias stable with respect to  function  $\psi: [0.3, 9.3]_{\mathbb{N}_{0.3}}\to \mathbb{R}^{+}$.
\end{example}
To solve the
linear   Hilfer fractional difference  IVP  we use the successive
approximation
method.
\begin{example} \label{Ex4.7} Let $\eta=\mu+\nu-\mu\nu$, with  $0 < \mu < 1$ and $0\le \nu \le 1$. Consider  the IVP for linear Hilfer fractional difference equation,
	\begin{equation}\label{eq425}
	\begin{split}
	\begin{cases}
	\Delta_{a}^{\mu,\nu}u(x)-\lambda u(x+\mu-1)=0, \\
	\Delta_{a}^{-(1-\eta)}u(a+1-\eta)=\zeta,  ~~~\zeta\in \mathbb{R}.
	\end{cases}
	\end{split}
	\end{equation}
	The solution of \eqref{eq425} is given by	
	\begin{equation*}
	\begin{split} u(x)
	=&\zeta h_{\eta-1}(x,a+1-\eta)+\lambda\Delta_{a+1-\mu}^{-\mu}
	u(x+\mu-1).
	\end{split}
	\end{equation*} 	
	Definition \ref{def201} and successive approximation yield the following
	\begin{equation}\label{eq426}
	\begin{split} u_{k}(x)
	=&u_{0}(x)+\lambda \sum_{\tau=a+1-\mu}^{x-\mu} h_{\mu-1}(x,\sigma (\tau))
	u_{k-1}(\tau+\mu-1),
	\end{split}
	\end{equation}
	for   $k=1, 2, 3, \cdots,$ where $u_{0}(x)=\zeta h_{\eta-1}(x,a+1-\eta).$
	\\ Initially for $k=1$ and by Lemma \ref{lem1}
	\begin{equation*}
	\begin{split} u_{1}(x)
	=&\zeta h_{\eta-1}(x,a+1-\eta)+\lambda \zeta h_{\eta-1+\mu}(x+\mu-1,a+1-\eta).
	\end{split}
	\end{equation*}
	Similarly for $k=2$
	\begin{equation*}
	\begin{split} u_{2}(x)
	=&\zeta \Big{[} h_{\eta-1}(x,a+1-\eta)+\lambda h_{\eta-1+\mu}(x+\mu-1, a+1-\eta) +{\lambda}^2  h_{\eta-1+2\mu}(x\\&+2(\mu-1), a+1-\eta)\Big{]}\\=&
	\zeta \Big{[} {\lambda}^0 \frac{(x+\eta-a-1)\fallingfactorial{0.\mu+\eta-1}}{\Gamma(\eta )}
	+{\lambda}^1 \frac{(x+\eta-a-1+(\mu-1))\fallingfactorial{1.\mu+\eta-1}}{\Gamma(\mu+\eta )}\\&+{\lambda}^2 \frac{(x+\eta-a-1+2(\mu-1))\fallingfactorial{2.\mu+\eta-1}}{\Gamma(2\mu+\eta )}\Big{]}.
	\end{split}
	\end{equation*}
	Proceeding inductively and let $k\to \infty$
	\begin{equation*}
	\begin{split} u(x)
	=&
	\zeta \Big{[}  \sum_{k=0}^{\infty} {\lambda}^k \frac{(x+\eta-a-1+k(\mu-1))\fallingfactorial{k\mu+\eta-1}}{\Gamma(k\mu+\eta )}\Big{]}.\end{split}
	\end{equation*}
	Now 	we use property $x\fallingfactorial{\mu+\nu}=(x-\nu)\fallingfactorial{\mu}~x\fallingfactorial{\nu}$ in the following  step,
	\begin{equation*}
	\begin{split}
	u(x)=&
	\zeta \Big{[}  \sum_{k=0}^{\infty} {\lambda}^k \frac{(x+\eta-a-1+(k-1)(\mu-1))\fallingfactorial{k\mu}(x+\eta-a-1+k(\mu-1))\fallingfactorial{\eta-1}}{\Gamma(k\mu+\eta )}\Big{]}.	
	\end{split}
	\end{equation*}	
	Now from the discrete form \eqref{eq426} we have numerical formula
	\begin{equation}\label{eq427}
	\begin{split} u(a+n)
	=&u(a)+ \frac{\lambda}{\Gamma(\mu)} \sum_{j=1}^{n} \frac{\Gamma(n-j+\mu)}{\Gamma(n-j+1)}
	u(a+j-1),
	\end{split}
	\end{equation}
	with $u(a)=\zeta \frac{\Gamma(n+\eta)}{\Gamma(\eta)\Gamma(n+1)}.$
	From  \eqref{eq427}, we can have
	\begin{equation*}
	\begin{split} y(n)
	=&\zeta \frac{\Gamma(n+\eta)}{\Gamma(\eta)\Gamma(n+1)}+ \frac{\lambda}{\Gamma(\mu)} \sum_{j=1}^{n} \frac{\Gamma(n-j+\mu)}{\Gamma(n-j+1)}
	y(j-1).
	\end{split}
	\end{equation*}
	For different values of $\nu$ numerical solutions for  $\mu=0.8$ and $\mu=0.5$  are shown in  Fig. 1 and  Fig. 2 respectively. Fig. 1 and  Fig. 2 show the 	interpolative behavior of Hilfer difference operator  between the	Riemann-Liouville \cite{DFC102} and the Caputo difference operator \cite{wu}.
\end{example}
\begin{remark}\label{rem1}
	If we set $\nu=1$ in Example \ref{Ex4.7} above (hence $\eta=1$), and take $a=\mu-1$, then we recover Example 17 in \cite{Abdel}. In fact, the solution of the initial Caputo difference equation
	\begin{equation}\label{ICDE}
	~^{C}\Delta_a^\mu x(t)=\lambda x(t+\mu-1),~~x(a)=x_0,~~\mu \in (0,1],
	\end{equation}
	will be given by
	\begin{equation}\label{srep}
	x(t)=x_0 E_{\underline{\mu}}(\lambda,t-a)=x_0 \sum_{k=0}^\infty \frac{\lambda^k (t-a+k(\mu-1))^{\underline{k \mu}}}{\Gamma(\mu k+1)}.
	\end{equation}
	Observe that the case $a=\mu-1$ will result in (66) in \cite{Abdel}. That is, the formula (66) in \cite{Abdel} represents 
	$E_{\underline{\mu}}(\lambda,t-(\alpha-1))$. Also, one can see that the substitution $\mu=1$ will give the delta discrete Taylor expansion of the delta discrete  exponential function.
\end{remark}
The observations in Remark \ref{rem1}, suggest the following modified alternative definitions which are different from that in \cite{Abdel}.
\begin{definition}\label{def207} For $\lambda \in \mathbb{R},~|\lambda|<1$ and $\mu,\eta,\gamma, z \in \mathbb{C}$ with $Re(\mu)>0,$
	the	discrete  Mittag-Leffler functions are defined by

	\begin{eqnarray}
	\nonumber 
	E^{\gamma}_{\fallingfactorial{\mu,\eta}}(\lambda,  z) &=& \sum_{k=0}^\infty \lambda^k\frac{(z+k(\mu-1))^{\underline{\mu k+\eta-1}} (\gamma)_k}{\Gamma(\mu k+\eta)k!},~(\gamma)_k=\gamma (\gamma+1)\cdots(\gamma+k-1),
	\end{eqnarray}
	
	\begin{equation}\label{nn}
	E_{\fallingfactorial{\mu,\eta}}(\lambda,  z)=E^{1}_{\fallingfactorial{\mu,\eta}}(\lambda,  z)=\sum_{k=0}^\infty \lambda^k\frac{(z+k(\mu-1))^{\underline{\mu k+\eta-1}} }{\Gamma(\mu k+\eta)},
	\end{equation}
	\begin{equation}\label{nn}
	E_{\fallingfactorial{\mu}}(\lambda,  z)=E_{\fallingfactorial{\mu,1}}(\lambda,  z)=\sum_{k=0}^\infty \lambda^k\frac{(z+k(\mu-1))^{\underline{\mu k}} }{\Gamma(\mu k+1)}.
	\end{equation}
\end{definition}
\noindent By the help of the fact that $x\fallingfactorial{\mu+\nu}=(x-\nu)\fallingfactorial{\mu}~x\fallingfactorial{\nu}$ , we note that 
\begin{eqnarray}\label{bbb}
E^\gamma_{\fallingfactorial{\mu,\mu}}(\lambda,  z)&=&\sum_{k=0}^\infty \lambda^k\frac{(z+k(\mu-1))^{\underline{\mu k+\mu-1}(\gamma)_k} }{\Gamma(\mu k+\mu)k!}\\ \nonumber
&=& \sum_{k=0}^{\infty} {\lambda}^k \frac{(z+(k-1)(\mu-1))\fallingfactorial{k\mu}(z+k(\mu-1))\fallingfactorial{\mu-1} (\gamma)_k}{\Gamma(k\mu+\mu )k!}.
\end{eqnarray}

%
\begin{definition}\label{def2088} For $\lambda \in \mathbb{R},~|\lambda|<1$ and $\mu,\eta,\gamma, z \in \mathbb{C}$ with $Re(\mu)>0,$
	the	discrete  Mittag-Leffler functions are defined by	
	
	\begin{eqnarray}
	\nonumber
	\textbf{E}^{\gamma}_{\fallingfactorial{\mu,\eta}}(\lambda,  z) &=& \sum_{k=0}^\infty \lambda^k\frac{(z+k(\mu-1)+\eta-1)^{\underline{\mu k+\eta-1}} (\gamma)_k}{\Gamma(\mu k+\eta)k!},
	\end{eqnarray}
	
	\begin{equation}\label{nn}
	\textbf{E}_{\fallingfactorial{\mu,\eta}}(\lambda,  z)=\textbf{E}^{1}_{\fallingfactorial{\mu,\eta}}(\lambda,  z)=\sum_{k=0}^\infty \lambda^k\frac{(z+k(\mu-1)+\eta-1)^{\underline{\mu k+\eta-1}} }{\Gamma(\mu k+\eta)},
	\end{equation}
	\begin{equation}\label{nn}
	\textbf{E}_{\fallingfactorial{\mu}}(\lambda,  z)=\textbf{E}_{\fallingfactorial{\mu,1}}(\lambda,  z)=E_{\fallingfactorial{\mu}}(\lambda,  z)=\sum_{k=0}^\infty \lambda^k\frac{(z+k(\mu-1))^{\underline{\mu k}} }{\Gamma(\mu k+1)}.
	\end{equation}
\end{definition}


Next we solve the non-homogeneous
Hilfer fractional difference  IVP, which shows that Definition \label{def208} is useful.

\begin{example} Let $\eta=\mu+\nu-\mu\nu$, with  $0 < \mu < 1$ and $0\le \nu \le 1$. Consider Hilfer non-homogeneous fractional difference equation,
	\begin{equation}\label{eq415}
	\begin{split}
	\begin{cases}
	\Delta_{a}^{\mu,\nu}u(x)-\lambda u(x+\mu-1)=f(x), \\
	\Delta_{a}^{-(1-\eta)}u(a+1-\eta)=\zeta,  ~~~\zeta\in \mathbb{R}.
	\end{cases}
	\end{split}
	\end{equation}
	The solution of \eqref{eq415} is given by	
	\begin{equation*}
	\begin{split} u(x)
	=&\zeta h_{\eta-1}(x,a+1-\eta)+\lambda\Delta_{a+1-\mu}^{-\mu}
	u(x+\mu-1)+\Delta_{a+1-\mu}^{-\mu}
	f(x).
	\end{split}
	\end{equation*} 	
	Then, Definition \ref{def201} and successive approximation yields the following
	\begin{equation*}
	\begin{split} u_{k}(x)
	=&u_{0}(x)+\lambda \sum_{\tau=a+1-\mu}^{x-\mu} h_{\mu-1}(x,\sigma (\tau))
	u_{k-1}(\tau+\mu-1)
	+\Delta_{a+1-\mu}^{-\mu}
	f(x),
	\end{split}
	\end{equation*}
	for   $k=1, 2, 3, \cdots,$ where $u_{0}(x)=\zeta h_{\eta-1}(x,a+1-\eta).$
	\\ Initially for $k=1$ and by Lemma \ref{lem1}
	\begin{equation*}
	\begin{split} u_{1}(x)
	=&\zeta h_{\eta-1}(x,a+1-\eta)+\lambda \zeta h_{\eta-1+\mu}(x+\mu-1,a+1-\eta)
	+\Delta_{a+1-\mu}^{-\mu}f(x).
	\end{split}
	\end{equation*}
	Similarly for $k=2$
	\begin{equation*}
	\begin{split} u_{2}(x)
	=&\zeta \Big{[} h_{\eta-1}(x,a+1-\eta)+\lambda h_{\eta-1+\mu}(x+\mu-1, a+1-\eta) +{\lambda}^2  h_{\eta-1+2\mu}(x\\&+2(\mu-1), a+1-\eta)\Big{]}+\lambda\Delta_{a+1-\mu}^{-2\mu}f(x+\mu-1)
	+\Delta_{a+1-\mu}^{-\mu}f(x)\\=&
	\zeta \Big{[} {\lambda}^0 \frac{(x+\eta-a-1)\fallingfactorial{0.\mu+\eta-1}}{\Gamma(\eta )}
	+{\lambda}^1 \frac{(x+\eta-a-1+(\mu-1))\fallingfactorial{1.\mu+\eta-1}}{\Gamma(\mu+\eta )}\\&+{\lambda}^2 \frac{(x+\eta-a-1+2(\mu-1))\fallingfactorial{2.\mu+\eta-1}}{\Gamma(2\mu+\eta )}\Big{]}+\lambda\Delta_{a+1-\mu}^{-2\mu}f(x+\mu-1)
	\\&+\Delta_{a+1-\mu}^{-\mu}f(x).
	\end{split}
	\end{equation*}
	Proceeding inductively and let $k\to \infty$
	\begin{equation*}
	\begin{split} u(x)
	=&
	\zeta \Big{[}  \sum_{k=0}^{\infty} {\lambda}^k \frac{(x+\eta-a-1+k(\mu-1))\fallingfactorial{k\mu+\eta-1}}{\Gamma(k\mu+\eta )}\Big{]}\\&+\sum_{k=1}^{\infty}{\lambda}^{k-1} \Delta_{a+1-\mu}^{-k\mu}f(x+(k-1)(\mu-1))\\=&\zeta \Big{[}  \sum_{k=0}^{\infty} {\lambda}^k \frac{(x+\eta-a-1+k(\mu-1))\fallingfactorial{k\mu+\eta-1}}{\Gamma(k\mu+\eta )}\Big{]}\\&+\sum_{k=1}^{\infty}{\lambda}^{k-1}
	\sum_{\tau=a+1-\mu}^{x-k\mu} h_{k\mu-1}(x,\sigma (\tau+(k-1)(\mu-1)))f(\tau)\\=&\zeta \Big{[}  \sum_{k=0}^{\infty} {\lambda}^k \frac{(x+\eta-a-1+k(\mu-1))\fallingfactorial{k\mu+\eta-1}}{\Gamma(k\mu+\eta )}\Big{]}\\&+\sum_{k=0}^{\infty}{\lambda}^{k}
	\sum_{\tau=a+1-\mu}^{x-k\mu-\mu} \frac{(x-\sigma (\tau)+k(\mu-1))\fallingfactorial{k\mu+\mu-1}}{\Gamma(k\mu+\mu )}f(\tau)
	\end{split}
	\end{equation*}
	\begin{equation*}
	\begin{split} u(x)
	=&
	\zeta \Big{[}  \sum_{k=0}^{\infty} {\lambda}^k \frac{(x+\eta-a-1+k(\mu-1))\fallingfactorial{k\mu+\eta-1}}{\Gamma(k\mu+\eta )}\Big{]}\\&+\sum_{\tau=a+1-\mu}^{x-\mu}\sum_{k=0}^{\infty}{\lambda}^{k}
	\frac{(x-\sigma (\tau)+k(\mu-1))\fallingfactorial{k\mu+\mu-1}}{\Gamma(k\mu+\mu )}f(\tau).\end{split}
	\end{equation*}
	In preceding step, we have interchanged summation of second expression. Now
	
	we use property $x\fallingfactorial{\mu+\nu}=(x-\nu)\fallingfactorial{\mu}~x\fallingfactorial{\nu}$ in the following  step,
	\begin{equation*}
	\begin{split}
	u(x)=&
	\zeta \Big{[}  \sum_{k=0}^{\infty} {\lambda}^k \frac{(x+\eta-a-1+(k-1)(\mu-1))\fallingfactorial{k\mu}(x+\eta-a-1+k(\mu-1))\fallingfactorial{\eta-1}}{\Gamma(k\mu+\eta )}\Big{]}\\&+\sum_{\tau=a+1-\mu}^{x-\mu}\sum_{k=0}^{\infty}{\lambda}^{k}
	\frac{(x-\sigma (\tau)+(k-1)(\mu-1))\fallingfactorial{k\mu}(x-\sigma (\tau)+k(\mu-1))\fallingfactorial{\mu-1}}{\Gamma(k\mu+\mu )}f(\tau).\end{split}
	\end{equation*}
	Using Definition \ref{def207}, we have	
	\begin{equation*}	u(x)=
	\zeta E_{\fallingfactorial{\mu,\eta}}(\lambda,  x+\eta-a-1) +\sum_{\tau=a+1-\mu}^{x-\mu}
	\Big{[}  E_{\fallingfactorial{\mu,\mu}}(\lambda, x-\sigma (\tau)) \Big{]}f(\tau).\end{equation*}
	Alternatively, by using Definition \ref{def2088}
	\begin{equation*}	u(x)=
	\zeta \textbf{E}_{\fallingfactorial{\mu,\eta}}(\lambda,  x-a) +\sum_{\tau=a+1-\mu}^{x-\mu}
	\Big{[} \textbf{ E}_{\fallingfactorial{\mu,\mu}}(\lambda, x-\sigma (\tau)+\mu-1) \Big{]}f(\tau).\end{equation*}
	
\end{example}
\noindent Note that above is the generalization  of Caputo fractional difference IVP \cite {Abdel}, one can prevail it for $\nu=1.$

\section{ Modified Gronwall's inequality and its application in delta difference setting}

First we develop a Gronwall's
inequality for  the delta difference operator.  Then
a simple utilization  of  Gronwall's inequality leads to   stability for Hilfer  difference equation.
For this purpose choose $u$ and $w$ such that
\begin{equation}\label{eq601}
u(x)
\le u(a) h_{\eta-1}(x,a+1-\eta)+\Delta_{a+1-\mu}^{-\mu}
\phi(x+\mu)u(x+\mu),
\end{equation}
\begin{equation}\label{eq602}
w(x)
\ge w(a) h_{\eta-1}(x,a+1-\eta)+\Delta_{a+1-\mu}^{-\mu}
\phi(x+\mu)w(x+\mu).
\end{equation}

\begin{lemma}\label{lem500}
	Assume $u$ and  $w$ are respectively satisfying \eqref{eq601} and \eqref{eq602}. If $w(a)\ge u(a)$, then $w(x)\ge u(x)$ for $x \in  \mathbb{N}_{a}.$
\end{lemma}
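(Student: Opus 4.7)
My plan is to form the difference $v(x)=w(x)-u(x)$ and prove $v(x)\ge 0$ on $\mathbb{N}_{a}$ by strong induction. Subtracting \eqref{eq601} from \eqref{eq602} and using the linearity of the fractional sum yields, after expanding by Definition \ref{def201},
\begin{equation*}
v(x)\ge v(a)\,h_{\eta-1}(x,a+1-\eta)+\sum_{\tau=a+1-\mu}^{x-\mu}h_{\mu-1}(x,\sigma(\tau))\,\phi(\tau+\mu)\,v(\tau+\mu).
\end{equation*}
The hypothesis $w(a)\ge u(a)$ supplies the base case $v(a)\ge 0$.

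For the inductive step I would fix $x\in\mathbb{N}_{a+1}$ and assume $v(s)\ge 0$ for every $s\in\mathbb{N}_{a}^{x-1}$. The two Taylor monomials appearing are non-negative: writing $x-a-1+\eta=k+\eta-1$ and $x-\tau-1=k+\mu-1$ for suitable $k\in\mathbb{N}_{0}$, the identity $(k+\alpha-1)\fallingfactorial{\alpha-1}=\Gamma(k+\alpha)/\Gamma(k+1)$ shows that both $h_{\eta-1}(x,a+1-\eta)$ and $h_{\mu-1}(x,\sigma(\tau))$ are strictly positive on the relevant ranges, since $\mu,\eta\in(0,1]$. Combining this with the positivity $\phi\ge 0$ (the standard structural assumption in Gronwall-type comparison results, which I would state explicitly), every term in the expanded right-hand side is a non-negative coefficient times a $v(s)$ with $s\le x$, and the inductive hypothesis makes all terms with $s<x$ non-negative. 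Passing this non-negativity through the inequality yields $v(x)\ge 0$ and closes the induction.

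The one delicate point, and the main obstacle, is the boundary contribution at $\tau=x-\mu$: there $h_{\mu-1}(x,\sigma(x-\mu))=(\mu-1)\fallingfactorial{\mu-1}/\Gamma(\mu)=1$, so this summand equals $\phi(x)\,v(x)$, a self-reference to the very quantity I wish to sign. I would handle it by transposing that term to the left, producing
\begin{equation*}
(1-\phi(x))\,v(x)\ge v(a)\,h_{\eta-1}(x,a+1-\eta)+\sum_{\tau=a+1-\mu}^{x-\mu-1}h_{\mu-1}(x,\sigma(\tau))\,\phi(\tau+\mu)\,v(\tau+\mu),
\end{equation*}
and then invoking the usual smallness condition $\phi(x)<1$ (in the spirit of the contraction-type bounds used in Theorems \ref{thm402}--\ref{thm403}) to divide through. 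Once this self-reference is absorbed, the inductive conclusion $v(x)\ge 0$ follows, giving $w(x)\ge u(x)$ on $\mathbb{N}_{a}$ as required.
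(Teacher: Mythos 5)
Your proposal is correct and follows essentially the same route as the paper: strong induction on $x$, an index shift to expose the self-referential summand $\phi(x)\bigl(w(x)-u(x)\bigr)$, and the rearrangement to $(1-\phi(x))\bigl(w(x)-u(x)\bigr)\ge 0$ closed by the smallness condition on $\phi$. You are in fact slightly more careful than the paper, since you state explicitly the nonnegativity of $\phi$ and of the Taylor monomials and the bound $\phi(x)<1$, which the paper uses but only mentions in passing at the very end of its argument.
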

\begin{proof} We give the proof by  induction principle.
	Assume $w(\tau)-u(\tau)\ge 0$ is valid for $\tau= a, a + 1,  \cdots, x-1.$ Then we have
	\begin{equation*}
	\begin{split} w(x)- u(x)
	\ge& h_{\eta-1}(x,a+1-\eta)(w(a)-u(a))+\Delta_{a+1-\mu}^{-\mu}
	\phi(x+\mu)w(x+\mu)\\&-\Delta_{a+1-\mu}^{-\mu}
	\phi(x+\mu)u(x+\mu)\\=&h_{\eta-1}(x,a+1-\eta)(w(a)-u(a))
	\\&+\sum_{\tau=a+1-\mu}^{x-\mu}
	\frac{(x-\sigma (\tau))\fallingfactorial{\mu-1}}{\Gamma(\mu )}
	\phi(\tau+\mu)(w(\tau+\mu)-u(\tau+\mu)),
	\end{split}
	\end{equation*}
	where the last summation is valid for   $x \in  \mathbb{N}_{a+\mu}.$ Now we shift the domain of summation to   $  \mathbb{N}_{a}.$
	\begin{equation*}
	\begin{split} w(x)- u(x)
	\ge&h_{\eta-1}(x,a+1-\eta)(w(a)-u(a))
	\\&+\sum_{\tau=a+1}^{x}
	\frac{(x+\mu-\sigma (\tau))\fallingfactorial{\mu-1}}{\Gamma(\mu )}
	\phi(\tau)(w(\tau)-u(\tau)).
	\end{split}
	\end{equation*}
	By  assumption,	for $\tau= a, a + 1, \cdots, x-1,$ we have
	\begin{equation*}
	\begin{split} w(x)- u(x)
	\ge &
	\phi(x)(w(x)-u(x)).
	\end{split}
	\end{equation*}
	This implies that $(1-\phi(x))(w(x)-u(x))\ge0$	and for  $|\phi(x)|<1,$  which is  desired result.
	
\end{proof}
Following the approach for nabla fractional difference in \cite{Atici2}, let $E_{v}\phi=\Delta_{a+1-\mu}^{-\mu}v(x)\phi(x).$ For constant $\phi$ one can use $E_{v}\phi$  to express Mittag-Leffler function.
\begin{theorem}\label{thm501}
	Assume $\eta=\mu+\nu-\mu\nu$, with  $0 < \mu < 1$ and $0\le \nu \le 1$.
	The solution of summation equation	
	\begin{equation*}
	\begin{split} u(x)
	=& u(a) h_{\eta-1}(x,a+1-\eta)+\Delta_{a+1-\mu}^{-\mu}
	v(x+\mu-1)u(x+\mu-1),
	\end{split}
	\end{equation*}
	is given by	
	\begin{equation*}
	\begin{split} u(x)
	=&\frac{u(a)}{\Gamma(\eta)}
	\sum_{\ell=0}^{\infty}	
	E_{v}^{\ell}(x+\eta-a-1+\ell(\mu-1))\fallingfactorial{\eta-1}.
	\end{split}
	\end{equation*}
\end{theorem}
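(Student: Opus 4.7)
The plan is to solve the summation equation by successive approximation, in the spirit of Example \ref{Ex4.7}, but now carrying the variable coefficient $v(x+\mu-1)$ through the iteration. I start by setting
\[
u_{0}(x) := u(a)\, h_{\eta-1}(x,a+1-\eta) = \frac{u(a)}{\Gamma(\eta)}(x+\eta-a-1)\fallingfactorial{\eta-1},
\]
which already matches the $\ell=0$ term on the right-hand side since $E_{v}^{0}$ is the identity. I then define the Picard iterates
\[
u_{k}(x) := u_{0}(x) + \Delta_{a+1-\mu}^{-\mu}\bigl[v(x+\mu-1)\,u_{k-1}(x+\mu-1)\bigr] = u_{0}(x) + E_{v}\, u_{k-1}(x),
\]
so that any fixed point of this recursion solves the given summation equation.

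The heart of the argument is an induction on $k$ showing that
\[
u_{k}(x) = \frac{u(a)}{\Gamma(\eta)} \sum_{\ell=0}^{k} E_{v}^{\ell}\,(x+\eta-a-1+\ell(\mu-1))\fallingfactorial{\eta-1}.
\]
The base case $k=0$ is the definition of $u_{0}$. For the inductive step, note that $u_{k+1}-u_{k} = E_{v}(u_{k}-u_{k-1})$, so the new $\ell=k+1$ contribution is simply one further $E_{v}$ applied to the $\ell=k$ contribution. Because each application of $E_{v}$ evaluates its argument at $x+\mu-1$ inside the fractional sum, the innermost $x$-occurrence of $f_{0}(y):=(y+\eta-a-1)\fallingfactorial{\eta-1}$ is replaced by $x+\mu-1$ at every step; after $\ell$ iterations this accumulates exactly the inner expression $(x+\eta-a-1+\ell(\mu-1))\fallingfactorial{\eta-1}$ that appears under $E_{v}^{\ell}$ in the theorem. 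Along the way I intend to invoke Lemma \ref{lem1} to justify how each $\Delta_{a+1-\mu}^{-\mu}$ acts on the falling-factorial factor produced at the previous level.

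Finally I pass to the limit $k\to\infty$ (under a smallness condition on $v$ ensuring convergence of this Neumann-type series, in analogy with the $|\phi(x)|<1$ assumption used in Lemma \ref{lem500}) to recover the claimed closed form. The principal difficulty I anticipate is the careful bookkeeping of starting points: each fractional sum $\Delta_{a+1-\mu}^{-\mu}$ must act on a function whose natural domain begins at $a+1-\mu$, and the shift $x\mapsto x+\mu-1$ induced by $E_{v}$ must remain compatible with the base point $a+1-\eta$ of $f_{0}$. This is precisely why $E_{v}$ is defined with the base point $a+1-\mu$, and keeping track of these starting points at every level is exactly what makes each invocation of Lemma \ref{lem1} inside an $E_{v}^{\ell}$ block legitimate.
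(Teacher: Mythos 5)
Your proposal is correct and follows essentially the same route as the paper: successive approximation starting from $u_{0}(x)=u(a)h_{\eta-1}(x,a+1-\eta)$, an induction showing $u_{k}(x)=\frac{u(a)}{\Gamma(\eta)}\sum_{\ell=0}^{k}E_{v}^{\ell}(x+\eta-a-1+\ell(\mu-1))\fallingfactorial{\eta-1}$, and passage to the limit $k\to\infty$. Your added remarks on convergence and on tracking the base points are reasonable refinements but do not change the argument.
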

\begin{proof}
	By method of successive approximation the following  is obtained
	\begin{equation*}
	\begin{split} u_{k}(x)
	=&u_{0}(x)+\Delta_{a+1-\mu}^{-\mu}v(x+\mu-1)	u_{k-1}(x+\mu-1),~~  k=1, 2, 3, \cdots,
	\end{split}
	\end{equation*}
	where $u_{0}(x)=u(a) h_{\eta-1}(x,a+1-\eta).$
	\\For $k=1$
	\begin{equation*}
	\begin{split} u_{1}(x)
	=&u(a) h_{\eta-1}(x,a+1-\eta)+\Delta_{a+1-\mu}^{-\mu}
	v(x+\mu-1) u_{0}(x+\mu-1)
	\\=&\frac{u(a)}{\Gamma(\eta)}E_{v}^{0}(x+\eta-a-1)\fallingfactorial{\eta-1}+\frac{u(a)}{\Gamma(\eta)}E_{v}^{1}(x+\eta-a-1+\mu-1)\fallingfactorial{\eta-1}.
	\end{split}
	\end{equation*}
	Proceeding inductively we get
	\begin{equation*}
	\begin{split} u_{k}(x)
	=&\frac{u(a)}{\Gamma(\eta)}
	\sum_{\ell=0}^{k}	
	E_{v}^{\ell}(x+\eta-a-1+\ell(\mu-1))\fallingfactorial{\eta-1}, ~~  k=1, 2, 3, \cdots,
	\end{split}
	\end{equation*}
	and let $k\to \infty,$
	\begin{equation*}
	\begin{split} u(x)
	=&\frac{u(a)}{\Gamma(\eta)}
	\sum_{\ell=0}^{\infty}	
	E_{v}^{\ell}(x+\eta-a-1+\ell(\mu-1))\fallingfactorial{\eta-1}.
	\end{split}
	\end{equation*}
\end{proof}
\noindent Next we derive  a Gronwall's inequality in delta discrete setting.
\begin{theorem}\label{thm502}
	Let $\eta=\mu+\nu-\mu\nu$, with  $0 < \mu < 1$ and $0\le \nu \le 1$. Assume $|v(x)|<1$ for $x \in  \mathbb{N}_{a}.$
	If $u$ and  $v$ are nonnegative real valued functions with
	\begin{equation*}
	\begin{split} u(x)
	\le&u(a) h_{\eta-1}(x,a+1-\eta)+\Delta_{a+1-\mu}^{-\mu}
	v(x+\mu-1)u(x+\mu-1).
	\end{split}
	\end{equation*}
	Then
	\begin{equation*}
	\begin{split} u(x)
	\le&\frac{u(a)}{\Gamma(\eta)}
	\sum_{\ell=0}^{\infty}	
	E_{v}^{\ell}(x+\eta-a-1+\ell(\mu-1))\fallingfactorial{\eta-1}.
	\end{split}
	\end{equation*}
\end{theorem}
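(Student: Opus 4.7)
The plan is to combine Theorem \ref{thm501} (which gives the exact solution of the companion summation \emph{equation}) with the comparison principle in Lemma \ref{lem500}, reducing the inequality for $u$ to an equality for an auxiliary function $w$ that we can solve explicitly.

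First I would let $w$ denote the unique solution of the summation \emph{equation}
\begin{equation*}
w(x)=u(a)\,h_{\eta-1}(x,a+1-\eta)+\Delta_{a+1-\mu}^{-\mu} v(x+\mu-1)\,w(x+\mu-1),
\end{equation*}
with the initial value $w(a)=u(a)$. By Theorem \ref{thm501}, $w$ is given in closed form by
\begin{equation*}
w(x)=\frac{u(a)}{\Gamma(\eta)}\sum_{\ell=0}^{\infty} E_{v}^{\ell}(x+\eta-a-1+\ell(\mu-1))\fallingfactorial{\eta-1},
\end{equation*}
which is exactly the right-hand side appearing in the conclusion.

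Next I would apply the comparison result Lemma \ref{lem500}, after matching notations via the shift $\phi(x+\mu)=v(x+\mu-1)$ so that the hypothesis $|v(x)|<1$ translates to the $|\phi(x)|<1$ condition used in the proof of that lemma. Since $u$ satisfies the hypothesized inequality, it meets condition \eqref{eq601}; since $w$ satisfies the corresponding equality, it trivially meets \eqref{eq602}; and by construction $w(a)=u(a)$. Lemma \ref{lem500} therefore yields $u(x)\le w(x)$ for every $x\in\mathbb{N}_{a}$, which is the desired bound once $w$ is replaced by its series representation.

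The only delicate point I anticipate is the bookkeeping needed to fit Lemma \ref{lem500} verbatim: the lemma is phrased with an argument $x+\mu$ inside the convolution kernel while the present theorem works with $x+\mu-1$, so I would spell out the reindexing $\tau\mapsto\tau-1$ once, and verify that the nonnegativity hypotheses on $u$ and $v$ make the $(1-\phi(x))(w(x)-u(x))\ge 0$ step in the inductive argument legitimate (this is where $|v(x)|<1$ is really used). Apart from this cosmetic adjustment, the proof is a direct assembly of the two previously established results and requires no new computation.
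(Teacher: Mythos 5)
Your proposal is correct and follows essentially the same route as the paper: the paper's own (very terse) proof likewise sets $w$ equal to the series solution furnished by Theorem \ref{thm501} and then invokes the comparison Lemma \ref{lem500} to conclude $u(x)\le w(x)$. Your additional remarks about matching $w(a)=u(a)$ and reconciling the $x+\mu$ versus $x+\mu-1$ indexing are sensible clarifications of details the paper leaves implicit.
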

\begin{proof}
	Consider $w(x)=\frac{u(a)}{\Gamma(\eta)}
	\sum_{\ell=0}^{\infty}	
	E_{v}^{\ell}(x+\eta-a-1+\ell(\mu-1))\fallingfactorial{\eta-1}.$ The  proof of  theorem follows from Lemma \ref{lem500} and Theorem \ref{thm501}.
\end{proof}
For $\eta=1,$ a special case is obtained as follow.
\begin{corollary}\label{cor503}
	Let  $0 < \mu < 1$ and $0\le \nu \le 1$. Assume $0<v(x)<1$ for $x \in  \mathbb{N}_{a}.$
	If $u$ is nonnegative real valued function with
	\begin{equation*}
	\begin{split} u(x)
	\le&u(a) +\Delta_{a+1-\mu}^{-\mu}
	v(x+\mu-1)u(x+\mu-1).
	\end{split}
	\end{equation*}
	Then
	\begin{equation*}
	\begin{split} u(x)
	\le& u(a)e_{v}(x,a),
	\end{split}
	\end{equation*}
	where $e_{v}(x,a)$ is the delta exponential function.
\end{corollary}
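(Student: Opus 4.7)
The plan is to follow the template of Theorem \ref{thm502}: construct a concrete majorant $w$ with $w(a)=u(a)$ satisfying the reverse inequality \eqref{eq602} (specialized to $\eta=1$), then invoke Lemma \ref{lem500}. Setting $\eta=1$ gives $h_{\eta-1}(x,a+1-\eta)=h_0(x,a)=1$, so the hypothesis of the corollary matches exactly the $\eta=1$ case of \eqref{eq601}. The natural candidate is $w(x)=u(a)\,e_v(x,a)$, which satisfies $w(a)=u(a)$ by the empty-product convention.

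The central step is to verify that
\begin{equation*}
w(x)\ge w(a)+\Delta_{a+1-\mu}^{-\mu} v(x+\mu-1)\,w(x+\mu-1),\qquad x\in\mathbb{N}_a.
\end{equation*}
Expanding the fractional sum, reindexing by $s=\tau+\mu-1$, and dividing by $u(a)$, this reduces to
\begin{equation*}
e_v(x,a)\ge 1+\sum_{s=a}^{x-1} h_{\mu-1}(x,s-\mu+2)\,v(s)\,e_v(s,a).
\end{equation*}
The telescoping identity $e_v(x,a)=1+\sum_{s=a}^{x-1}v(s)\,e_v(s,a)$, which follows from the recursion $e_v(s+1,a)=(1+v(s))e_v(s,a)$, then reduces the task to the pointwise kernel bound $h_{\mu-1}(x,s-\mu+2)\le 1$ for every $s\in\{a,a+1,\ldots,x-1\}$.

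Writing $s=x-j$ with $j\ge 1$, the kernel bound rewrites as $\Gamma(j+\mu-1)/(\Gamma(j)\Gamma(\mu))\le 1$. A short induction on $j$ settles it: the base $j=1$ gives equality, while the inductive ratio $(j+\mu-1)/j$ is at most $1$ because $\mu\le 1$. With the reverse inequality for $w$ in hand, Lemma \ref{lem500} yields $u(x)\le w(x)=u(a)e_v(x,a)$ on $\mathbb{N}_a$. The principal obstacle is precisely this kernel bound: the passage from the fractional summation weight to the uniform classical weight implicit in the delta exponential requires exactly this monotonicity. The hypothesis $|v(x)|<1$ is used only to guarantee that $e_v(x,a)$ is finite and positive, so that the comparison is meaningful.
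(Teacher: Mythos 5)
Your proof is correct, but it takes a genuinely different route from the paper's. The paper derives the corollary from Theorem \ref{thm502}: it specializes the series bound to $\eta=1$, obtaining $u(x)\le u(a)\sum_{\ell=0}^{\infty}E_{v}^{\ell}(1)$, and then identifies this series with $e_{v}(x,a)$ by checking that it solves the first-order IVP $\Delta y=v(x)y$, $y(a)=1$, and appealing to uniqueness. You bypass the series entirely: you take $w(x)=u(a)e_{v}(x,a)$ as an explicit supersolution, verify the reverse inequality \eqref{eq602} (with $\eta=1$) via the pointwise kernel estimate $h_{\mu-1}(x,s-\mu+2)=\Gamma(j+\mu-1)/(\Gamma(j)\Gamma(\mu))\le 1$ for $j=x-s\ge 1$ combined with the telescoping identity $e_{v}(x,a)=1+\sum_{s=a}^{x-1}v(s)e_{v}(s,a)$, and conclude by the comparison Lemma \ref{lem500}. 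Your route is more elementary and arguably more robust: it avoids applying $\Delta$ term by term to an infinite series, and it avoids the paper's claimed identity $\sum_{\ell}E_{v}^{\ell}(1)=e_{v}(x,a)$, whose verification in the paper silently replaces $\Delta_{a+1-\mu}^{-\mu}$ by $\Delta_{a}^{-1}$ and hence really only covers $\mu=1$; for $\mu<1$ your kernel bound shows the correct relation is an inequality ($\le$), which is all the corollary needs. One small correction: the hypothesis $v(x)<1$ is not used merely to keep $e_{v}$ finite and positive --- it is exactly what makes the final step of Lemma \ref{lem500} work (isolating the $\tau=x$ term, whose kernel weight equals $1$, requires $1-v(x)>0$), so your argument does rely on it through the lemma as well.
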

\begin{proof}
	It follows from Theorem \ref{thm502} that
	\begin{equation*}u(x)
	\le u(a) \sum_{\ell=0}^{\infty}	
	E_{v}^{\ell}(1).\end{equation*}
	We claim that $ \sum_{\ell=0}^{\infty}	
	E_{v}^{\ell}(1)=e_{v}(x,a).$
	To justify our claim, we utilize the uniqueness  of solution   of  following IVP,
	$\Delta u(x) = v(x)u(x), ~u(a) = 1$. A unique solution $u(x)=e_{v}(x,a)$ of IVP  is given in \cite{Goodrich} for  regressive function $v(x)$.
	Thus, we  have to show that $ \sum_{\ell=0}^{\infty}	
	E_{v}^{\ell}(1)$
	satisfies the IVP $\Delta u(x) = v(x)u(x), ~u(a) = 1.$ Indeed,
	\begin{equation*}
	\begin{split}\Delta \sum_{\ell=0}^{\infty}	
	E_{v}^{\ell}(1)=& \sum_{\ell=0}^{\infty}\Delta 	
	E_{v}^{\ell}(1)\\=&  \sum_{\ell=1}^{\infty}\Delta 	
	E_{v}(E_{v}^{\ell-1}(1))\\=& \sum_{\ell=1}^{\infty}\Delta 	
	\Delta_{a}^{-1} (v(x)E_{v}^{\ell-1}(1))=v(x)\sum_{\ell=0}^{\infty}E_{v}^{\ell}(1).
	\end{split}
	\end{equation*}
	Also by Definition \ref{def201} and empty sum convention we have $\sum_{\ell=0}^{\infty}	
	E_{v}^{\ell}(1)(a)=1+\sum_{\ell=1}^{\infty}	
	E_{v}^{\ell}(1)(a)=1.$ Then the result follows.
\end{proof}
\noindent Let $\eta=\mu+\nu-\mu\nu$, then for  $0 < \mu < 1$ and $0\le \nu \le 1$, we have  $0  < \eta \le  1.$  Following result illustrates the application of Gronwall's inequality, for the system
\begin{equation}\label{eq416}
\begin{split}
\begin{cases}
\Delta_{a}^{\mu,\nu}v(x)+g(x+\mu-1, v(x+\mu-1))=0,\text{for}~x \in \mathbb{N}_{a+1-\mu}, \\
\Delta_{a}^{-(1-\eta)}v(a+1-\eta)=\xi,  ~~~\xi\in \mathbb{R}.
\end{cases}
\end{split}
\end{equation}
\begin{theorem}
	Assume Lipschitz condition 	   $|g(x,u)- g(x,v)|\leq K|u-v|$ holds for function $g$.  Then the solution to Hilfer fractional difference system is  stable.
\end{theorem}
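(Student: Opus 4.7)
The plan is to compare the two systems \eqref{eq414} and \eqref{eq416} with initial data $\zeta$ and $\xi$ respectively, by converting both to their equivalent summation form given by Lemma \ref{lem402}, then bounding the difference via the Lipschitz hypothesis, and finally applying the Gronwall inequality (Theorem \ref{thm502}) to close the estimate.

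First I would write, for $x \in \mathbb{N}_{a+1}$,
\begin{equation*}
u(x)=\zeta\, h_{\eta-1}(x,a+1-\eta)-\Delta_{a+1-\mu}^{-\mu} g(x+\mu-1,u(x+\mu-1)),
\end{equation*}
\begin{equation*}
v(x)=\xi\, h_{\eta-1}(x,a+1-\eta)-\Delta_{a+1-\mu}^{-\mu} g(x+\mu-1,v(x+\mu-1)),
\end{equation*}
and subtract. Using the Lipschitz bound on $g$, linearity of $\Delta_{a+1-\mu}^{-\mu}$, and the triangle inequality applied inside the summation that defines the fractional sum (Definition \ref{def201}), I obtain
\begin{equation*}
|u(x)-v(x)| \le |\zeta-\xi|\, h_{\eta-1}(x,a+1-\eta)+\Delta_{a+1-\mu}^{-\mu}\bigl(K\,|u(x+\mu-1)-v(x+\mu-1)|\bigr).
\end{equation*}
Setting $U(x):=|u(x)-v(x)|$, $U(a):=|\zeta-\xi|$, and $v(x)\equiv K$ (a constant, which certainly meets $|v(x)|<1$ whenever the contraction-type condition $K<1$ is assumed — the natural counterpart of the bound used in Theorem \ref{thm403}), the inequality takes precisely the form required to invoke Theorem \ref{thm502}.

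Applying that theorem yields
\begin{equation*}
|u(x)-v(x)| \le \frac{|\zeta-\xi|}{\Gamma(\eta)}\sum_{\ell=0}^{\infty} E_{K}^{\ell}(x+\eta-a-1+\ell(\mu-1))^{\underline{\eta-1}},
\end{equation*}
so that the solution depends continuously on the initial datum on the finite domain $[a,T]_{\mathbb{N}_a}$; this is the stability statement. The main obstacle I anticipate is not the inequality chain itself but the verification that the hypotheses of Theorem \ref{thm502} are legitimately satisfied — in particular, one needs $K$ small enough (the same smallness condition $|v(x)|<1$ used in Lemma \ref{lem500}'s induction step) so that the iteration producing the Gronwall bound converges, and one needs to check that the shift in starting point when passing from $\Delta_{a+1-\mu}^{-\mu}g(x+\mu-1,\cdot)$ to an inequality of the form \eqref{eq601} is handled correctly. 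Once these technicalities are in place, the Mittag–Leffler-type series on the right is a finite-valued majorant on $[a,T]_{\mathbb{N}_a}$, so $|u-v|\to 0$ as $|\zeta-\xi|\to 0$ uniformly in $x$, which is the desired stability.
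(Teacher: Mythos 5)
Your proposal follows essentially the same route as the paper: convert both IVPs to their summation forms via Lemma \ref{lem402}, subtract, apply the Lipschitz bound, and invoke the Gronwall inequality of Theorem \ref{thm502}; the paper then only goes one step further, rewriting the resulting series as the discrete Mittag--Leffler function $E_{\fallingfactorial{\mu,\eta}}(K, x+\eta-a-1)$ via Lemma \ref{lem1} and phrasing stability through a sequence of initial data $\zeta_n\to\zeta$. Your observation that the hypothesis $|v(x)|<1$ of Theorem \ref{thm502} forces the smallness condition $K<1$ is a point the paper's statement leaves implicit, and it is correct to flag it.
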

\begin{proof}
	Let $u\in Z$  be a solution of system \eqref{eq414} and  $v\in Z$  be a solution of system \eqref{eq416}. Then the corresponding summation equations are
	\begin{equation*}
	\begin{split} u(x)
	=&\zeta h_{\eta-1}(x,a+1-\eta)- \Delta_{a+1-\mu}^{-\mu}
	g(x+\mu-1, u(x+\mu-1)),
	\end{split}
	\end{equation*}
	\begin{equation*}
	\begin{split} v(x)
	=&\xi h_{\eta-1}(x,a+1-\eta)- \Delta_{a+1-\mu}^{-\mu}
	g(x+\mu-1, v(x+\mu-1)).
	\end{split}
	\end{equation*}
	For the  absolute value of the difference  we have  $|u(x)-v(x)|$
	\begin{equation*}
	\begin{split}
	\le&|\zeta-\xi| |h_{\eta-1}(x,a+1-\eta)|\\&+ |\Delta_{a+1-\mu}^{-\mu}
	(g(x+\mu-1, u(x+\mu-1))-
	g(x+\mu-1, v(x+\mu-1)))|
	\\\le&|\zeta-\xi| h_{\eta-1}(x,a+1-\eta)+ \Delta_{a+1-\mu}^{-\mu}
	K|u(x+\mu-1)- v(x+\mu-1)|.
	\end{split}
	\end{equation*}
	Then it follows from the  Theorem \ref{thm502}  that
	\begin{equation*}
	\begin{split}  |u(x)-v(x)|
	\le&
	\frac{|\zeta-\xi|}{\Gamma(\eta)}
	\sum_{\ell=0}^{\infty}	
	E_{K}^{\ell}(x+\eta-a-1+\ell(\mu-1))\fallingfactorial{\eta-1}.
	\end{split}
	\end{equation*}
	By using Lemma \ref{lem1} we get, $E_{K}^{\ell}(x+\eta-a-1+\ell(\mu-1))\fallingfactorial{\eta-1}=  \frac{K^\ell\Gamma(\eta)}{\Gamma(\eta+\mu\ell)} (x+\eta-a-1+\ell(\mu-1))\fallingfactorial{\eta+\mu\ell-1}.$  To shape in the form of discrete  Mittag-Leffler function we use property $x\fallingfactorial{\mu+\nu}=(x-\nu)\fallingfactorial{\mu}~x\fallingfactorial{\nu},$
	\begin{equation*}
	\begin{split}  |u(x)-v(x)|
	\le&
	|\zeta-\xi|
	\sum_{\ell=0}^{\infty} \frac{K^\ell}{\Gamma(\eta+\mu\ell)} (x+\eta-a-1+(k-1)(\mu-1))\fallingfactorial{k\mu}\\& \times (x+\eta-a-1+k(\mu-1))\fallingfactorial{\eta-1}\\=&
	|\zeta-\xi|
	E_{\fallingfactorial{\mu,\eta}}(K,  x+\eta-a-1),
	\end{split}
	\end{equation*}
	where  $E_{\fallingfactorial{\mu,\eta}}(\lambda,  x)$ is discrete  Mittag-Leffler functions defined in \cite{Abdel}.
	Replace system \eqref{eq416} with
	\begin{equation}\label{eq418}
	\begin{split}
	\begin{cases}
	\Delta_{a}^{\mu,\nu}v(x)+g(x+\mu-1, v(x+\mu-1))=0,~~ \\
	\Delta_{a}^{-(1-\eta)}v(a+1-\eta)={\zeta}_{n},
	\end{cases}
	\end{split}
	\end{equation}
	for $x \in \mathbb{N}_{a+1-\mu}$  and $ {\zeta}_{n} \to \zeta.$  The solutions are denoted by $v_{n}$. Now we have
	\begin{equation*}
	\begin{split}  |u(x)-v_{n}(x)|
	\le&
	|\zeta-{\zeta}_{n}|
	E_{\fallingfactorial{\mu,\eta}}(K,  x+\eta-a-1).
	\end{split}
	\end{equation*}
	This leads to $|u(x)-v_{n}(x)|\to 0,$ when $ {\zeta}_{n} \to \zeta$ for $n\to \infty.$  This complete the proof.
\end{proof}
\section*{Conclusion}
We finish by concluding:\begin{itemize}
	\item A new  definition of Hilfer like  fractional difference  on discrete time scale  has been presented.
	\item The  delta  Laplace transform has been presented for newly defined  Hilfer  fractional difference operator.
	\item We have investigated a new class of Hilfer like fractional nonlinear difference equation with initial conditions involving Reimann-Liouville fractional sum.
	\item In particular, condition for the  existence, uniqueness  and two types of stabilities, called Ulam-Hyers stability and Ulam-Hyers-Rassias stability has been obtained.
	\item The  linear   Hilfer fractional difference equation with initial conditions  is solved and  alternative versions  of discrete  Mittag-Leffler functions are presented in comparison to \cite{Abdel}.	
	\item A Gronwall's inequality has been presented and applied for discrete calculus with the delta operator.
\end{itemize}


\section*{References}

\newpage
\begin{figure}[htb]
	\centering{
		\resizebox*{13cm}{!}{\includegraphics{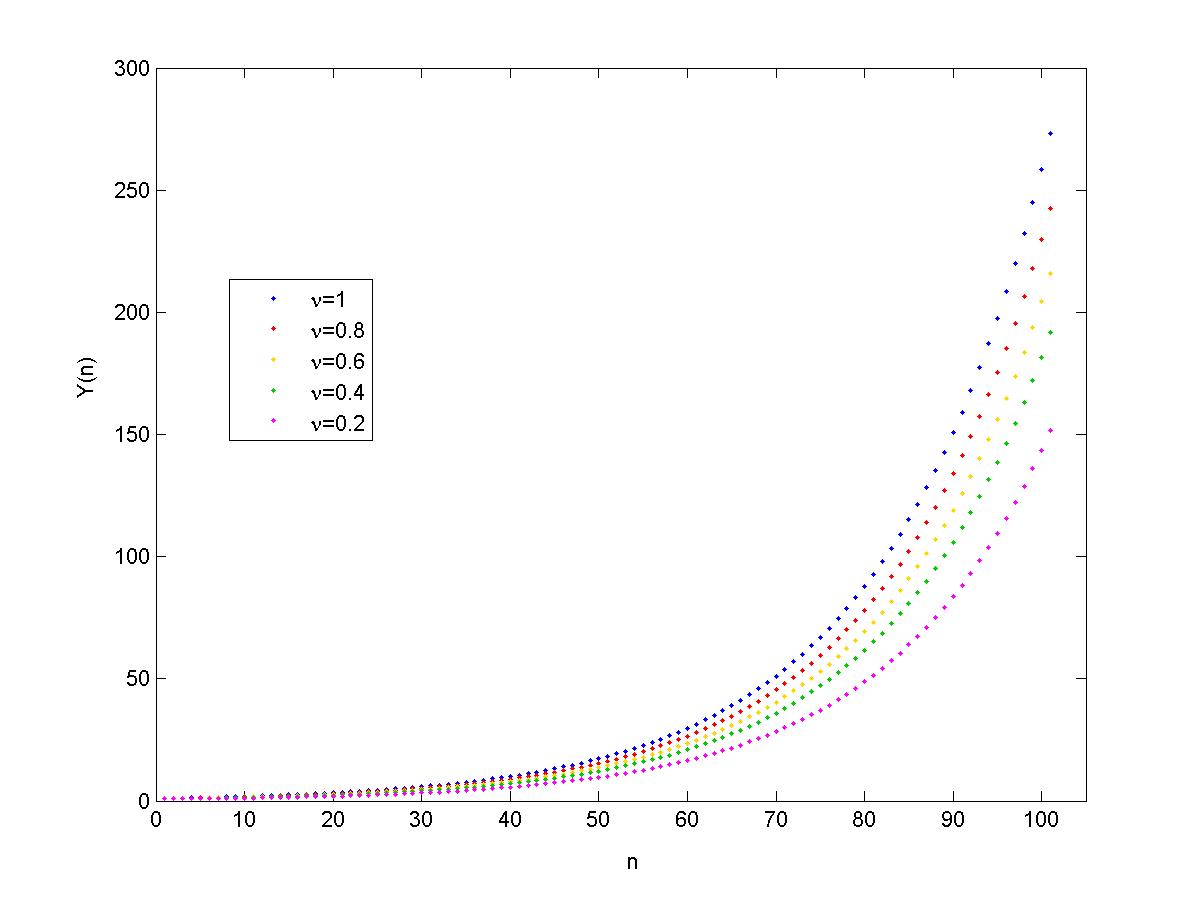}}}
	\caption{Solutions for $\lambda=0.1$, $\mu=0.8$ and different values of $\nu$. } \label{figure1}
	\centering{
		\resizebox*{13cm}{!}{\includegraphics{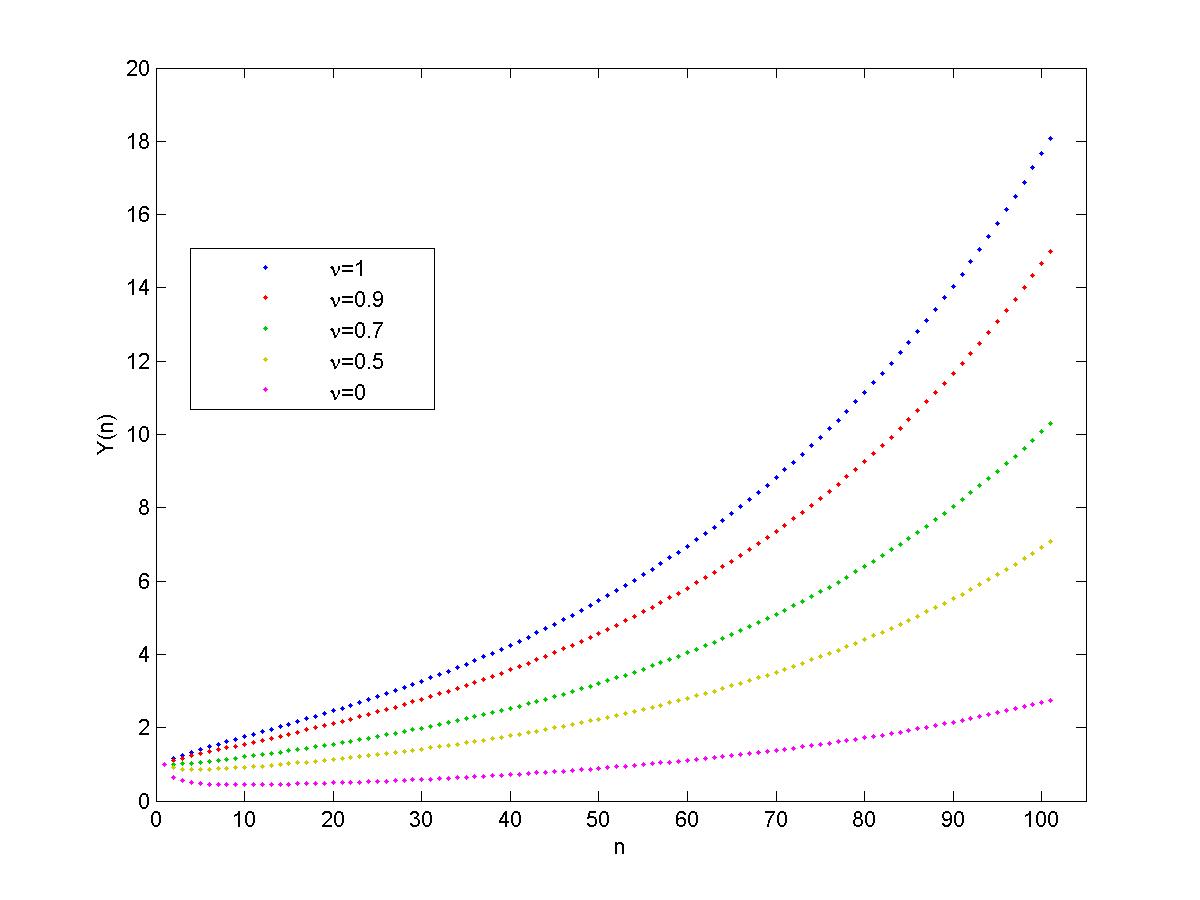}}}
	\caption{Solutions for $\lambda=0.1$, $\mu=0.5$ and different values of $\nu$. } \label{figure2}
\end{figure}

\end{document}